\newcommand{\eremk}{\hbox{}\hfill\rule{0.8ex}{0.8ex}}
\definecolor{myblue}{HTML}{1f77b4}
\definecolor{myorange}{HTML}{ff4c0e}
\definecolor{myyellow}{HTML}{bcbd22}
\definecolor{mypurple}{HTML}{9467bd}
\definecolor{mygreen}{HTML}{2ca02c}
\newtheorem{theorem}{Theorem}[section]
\newtheorem{assumption}[theorem]{Assumption}
\newtheorem{lemma}[theorem]{Lemma}
\newtheorem{proposition}[theorem]{Proposition}
\newtheorem{remark}[theorem]{Remark}
\newcommand{\bx}{\boldsymbol{x}}
\newcommand{\bh}{\boldsymbol{h}}
\renewcommand{\div}{\text{div}}
\DeclareFontFamily{U}{matha}{\hyphenchar\font45}
\DeclareFontShape{U}{matha}{m}{n}{
<-6> matha5 <6-7> matha6 <7-8> matha7
<8-9> matha8 <9-10> matha9
<10-12> matha10 <12-> matha12
}{}
\DeclareSymbolFont{matha}{U}{matha}{m}{n}
\DeclareFontFamily{U}{mathx}{\hyphenchar\font45}
\DeclareFontShape{U}{mathx}{m}{n}{
<-6> mathx5 <6-7> mathx6 <7-8> mathx7
<8-9> mathx8 <9-10> mathx9
<10-12> mathx10 <12-> mathx12
}{}
\DeclareSymbolFont{mathx}{U}{mathx}{m}{n}
\DeclareMathDelimiter{\vvvert} {0}{matha}{"7E}{mathx}{"17}%
\DeclarePairedDelimiterX{\normiii}[1]
{\vvvert}
{\vvvert}
{\ifblank{#1}{\:\cdot\:}{#1}}
\renewcommand{\phi}{\varphi}
\newcommand{\R}{\mathbb{R}}
\newcommand{\N}{\mathbb{N}}
\newcommand{\Int}{\mathcal{I}_{h_t}^{p_t -1}}
\newcommand{\USG}{{\stackrel{*}{U}}_{\bh}^{\raisebox{-1ex}{$\scriptstyle p_t + 1$}}}
\newcommand{\NL}{g}
\newcommand{\GLe}{\mathsf{GLe}}
\newcommand{\GLo}{\mathsf{GLo}}
\newcommand{\mdot}[1]{\accentset{\scaleobj{0.6}{\bullet}}{#1}}
\title{Stability, convergence, and geometric properties of second-order-in-time space--time discretizations for linear and semilinear wave equations}
\author{$^a$Matteo~Ferrari, $^a$Ilaria~Perugia, $^a$Enrico~Zampa \vspace{0.5cm}}
\date{
$^a$ Faculty of Mathematics, University of Vienna\\
Oskar-Morgenstern-Platz 1, 1090 Vienna, Austria}
\begin{document}
\maketitle

%%%%%%%%%%%%%%%%%%%%%%%%%%%%
\begin{abstract}
\noindent 
We revisit second-order-in-time space--time discretizations of the linear and semilinear wave equations by establishing precise equivalences with first-order-in-time formulations. Focusing on schemes using continuous piecewise-polynomial trial functions in time, we analyze their stability, convergence, and geometric properties. We consider first a weak space--time formulation with test functions projected onto discontinuous polynomials of one degree lower in time, showing that it is equivalent to the scheme proposed in [French, Peterson 1996] in the linear case, and extended in [Karakashian, Makridakis 2005] to the semilinear case. In particular, this equivalence shows that this method conserves energy at mesh nodes but is not symplectic. We then introduce two symplectic variants, obtained through Gauss--Legendre and Gauss--Lobatto quadratures in time, and show that they correspond to specific Runge--Kutta time integrators. These connections clarify the geometric structure of the space–time methods considered.
\end{abstract}
%%%%%%%%%%%%%%%%%%%%%%%%%%%%

\section{Introduction}

\noindent 
We study a class of second-order-in-time space--time Galerkin methods for the linear and semilinear wave equations and establish novel equivalence results with first-order-in-time formulations. These equivalences enable the rigorous transfer of known properties from first-order to second-order schemes.

\smallskip
\noindent
Second-order-in-time space--time Galerkin schemes for the wave equation have recently attracted significant interest. A common construction begins by multiplying the wave equation by test functions of space and time that vanish at the final time, followed by integration by parts in both space and time. The initial condition for the solution is then imposed strongly, while the initial condition for its time derivative is enforced weakly. The resulting formulation is then discretized with continuous finite elements in both space and time. For the linear wave equation, it is well known that such a formulation may fail to be unconditionally stable when standard discrete time spaces are used; see \cite[Remark 4.9]{SteinbachZank2020} for continuous piecewise linear polynomials, and \cite[Theorem 5.9]{FerrariFraschini2024} for maximal regularity splines. This instability arises from the lack of inf--sup stability in the energy norm \cite[Theorem 4.2.23]{Zank2020}. To overcome this issue, various stabilization techniques have been proposed.

\smallskip
\noindent
In \cite{SteinbachZank2019}, a stabilization technique was introduced for continuous piecewise linear polynomials in time, in which the test function in the spatial stiffness term is projected onto a space of piecewise constant functions in time. This approach was later extended and numerically validated for higher-order continuous piecewise polynomials in~\cite{Zank2021}, by projecting onto discontinuous piecewise polynomial of one degree lower. For continuous piecewise linear polynomials, the stabilization proposed in~\cite{SteinbachZank2019} is equivalent to adding a non-consistent penalty term. Along these lines, a stabilization method was proposed in~\cite{FraschiniLoliMoiolaSangalli2023} (and subsequently analyzed in~\cite{FerrariFraschini2024}) for maximal regularity splines in time. Another approach involves transforming the test functions using an operator: the modified Hilbert transform, which has yielded promising numerical results in~\cite{LoscherSteinbachZank2023}, or via Morawetz multiplier \cite{BignardiMoiola2023}.

\smallskip
\noindent
In this paper, we prove that a slight modification of the stabilized formulation of~\cite{Zank2021}
is equivalent to the first-order-in-time discontinuous Galerkin--continuous Galerkin (DG--CG) method introduced in~\cite{BalesLasiecka1994,FrenchPeterson1996} for the linear wave equation. This implies that the second-order-in-time method possesses the same theoretical properties that have been established for the first-order-in-time method in~\cite{FrenchPeterson1996,Gomez2025}. Since the modification affects only the right-hand side term, the stability results also carry over to the original formulation of~\cite{Zank2021}. 
This equivalence generalizes the result of~\cite{Zank2019}, which showed that on tensor product meshes the lowest-order discretization in~\cite{Zank2021} coincides with the Crank--Nicolson scheme in time (equivalently, Newmark with parameters~$(\beta,\gamma)=(1/4,1/2)$). Up to the treatment of the right-hand side, this scheme is itself equivalent to the lowest-order DG--CG method. We then apply the same framework to the semilinear wave equation. In all terms of the resulting bilinear form, except for the temporal stiffness term, a projection onto the space of discontinuous polynomials of one degree lower is inserted. The resulting formulation is equivalent to the extension of the first-order-in-time DG--CG method analyzed in~\cite{KarakashianMakridakis2005}, so that the stability and convergence established there carry over directly to the second-order-in-time scheme. While preserving the unconditional stability and convergences properties established in \cite{FrenchPeterson1996, KarakashianMakridakis1998, Gomez2025}, the second-order-in-time formulation reduces the number of unknowns by half. Moreover, although it uses trial and test functions that are continuous in time, it can still be implemented in a time-stepping fashion and efficient solvers are available, see \cite[\S 7]{Zank2025} (and also \cite{Tani2017,LangerZank2021,LoliSangalli2025}).

\smallskip
\noindent
The methods discussed so far exactly conserve a discrete energy. In practice, however, nonlinear terms are integrated using quadrature, so energy conservation holds only up to quadrature error. 
Quadrature can also be exploited to preserve other geometric properties, such as symplecticity. Building on this, we propose two symplectic variants based on Gauss--Legendre and Gauss--Lobatto quadratures. On tensor product meshes, these correspond, respectively, to applying the Gauss--Legendre Runge--Kutta or the Lobatto IIIA/IIIB Runge--Kutta time discretizations to the first-order-in-time equivalent formulation. As a result, they are symplectic, with the Gauss--Legendre variant unconditionally stable and the Gauss--Lobatto variant only conditionally stable. We summarize the main properties of the considered methods in Table~\ref{tab:properties}.

\begin{table}[h!]
\centering
\begin{tabular}{|l|c|c|c|}
\hline
\textbf{Method} & \textbf{Stability} & \textbf{Energy preservation} & \textbf{Symplecticity} \\
\hline
Unstabilized & $\triangle$  & $\times$ & $\checkmark$ \\
Stabilized & $\checkmark$ & $\checkmark$ (up to quadrature error) & $\times$ \\
Gauss--Legendre & $\checkmark$ & $\times$ & $\checkmark$ \\
Gauss--Lobatto  & $\triangle$  & $\times$ & $\checkmark$ \\
\hline
\end{tabular}
\caption{Properties of four second-order-in-time methods: ``Unstabilized'' is the basic method, ``Stabilized'' refers to the method with the projection onto discontinuous polynomials of one degree lower, and ``Gauss--Legendre'' and ``Gauss--Lobatto'' are the methods with the corresponding quadratures.
$\checkmark$ = yes, $\times$ = no, $\triangle$ = conditional.}
\label{tab:properties}
\end{table}

\medskip
\noindent 
{\bf Outline.} The paper is organized as follows. In Section~\ref{sec:2}, we focus on the second-order-in-time space--time method for the linear wave equation and establish its equivalence with the first-order-in-time formulation. This yields an unconditionally stable scheme that can be efficiently implemented using under-integration in time. In Section~\ref{sec:3}, we extend the same framework to the semilinear wave equation. Section~\ref{sec:33} presents symplectic variants based on Gauss--Legendre and Gauss--Lobatto quadratures, which preserve key geometric properties.
Numerical results reported in sections \ref{sec:251} and \ref{sec:321} validate, respectively, energy preservation in the linear case and the convergence rates for the sine-Gordon equation.

\section{The linear case} \label{sec:2}

In this section, we consider a second-order-in-time space--time method for the \emph{linear} wave equation, which is a slight modification of the stabilized approach from~\cite{Zank2021}. We prove an equivalence result between this formulation and the space–time Discontinuous–Continuous Galerkin (DG–CG) discretization of the problem in its first-order-in-time form, as introduced in~\cite{BalesLasiecka1994, FrenchPeterson1996}, allowing theoretical results established for the latter to be extended to the former. We also describe an efficient implementation of the second-order-in-time method that avoids solving local linear systems for computing the~$L^2$ projections required by the formulation.

\subsection{The linear wave equation}
We consider the following initial-value problem with homogeneous Dirichlet boundary conditions
\begin{equation} \label{eq:1}
	\begin{cases}
		\partial_{t}^2 U - \div_{\bx}(c^2 \nabla_{\bx} U)  =  F & \text{in~} Q_T := \Omega \times (0, T), 
		\\ U = 0 &  \text{on~} \partial \Omega \times (0,T),
		\\ U(\cdot,0) = U_0, \quad \partial_t U(\cdot,t)_{|_{t=0}} = V_0 & \text{in~} \Omega,
	\end{cases}
\end{equation}
where~$\Omega \subset \R^d$ ($d = 1, 2, 3$) is a 
polytopal Lipschitz domain,~$T > 0$ is a finite time, and the wave velocity $c \in C(\overline{\Omega})$, $c(\bx) \ge c_0 > 0$ for all $\bx \in \Omega$. Moreover, the given data satisfy
\begin{equation*}
     U_0 \in H^1_0(\Omega), \quad V_0 \in L^2(\Omega), \quad F \in L^1(0,T;L^2(\Omega)).
\end{equation*}
Consider the following variational formulation of problem~\eqref{eq:1}: find $U \in C([0,T];H_0^1(\Omega)) \cap C^1([0,T];L^2(\Omega))$ such that $U(\cdot,0) = U_0$ in $H_0^1(\Omega)$, $\partial_t U(\cdot,t)|_{t=0} = V_0$ in $L^2(\Omega)$, and 
\begin{equation} \label{eq:2}
    \int_0^T \langle \partial_t^2 U(\cdot,t), W(\cdot,t) \rangle_{H^1_0(\Omega)} \, \dd t + (c^2 \nabla_{\bx} U, \nabla_{\bx} W)_{L^2(Q_T)} = \int_0^T (F(\cdot,t),W(\cdot,t))_{L^2(\Omega)} \, \dd t
\end{equation}
for all $W \in L^2(0,T;H^1_0(\Omega))$. Here, $\langle \cdot, \cdot \rangle_{H^1_0(\Omega)}$ denotes the duality pairing between $[H_0^1(\Omega)]'$ and $H_0^1(\Omega)$. Existence and uniqueness of a solution to \eqref{eq:2} is shown in \cite[Theorem 2.1 in Part I]{LasieckaTriggiani1994}. Moreover, any solutions of~\eqref{eq:2} actually satisfies
\begin{equation} \label{eq:3}
    U \in C([0,T];H_0^1(\Omega)) \cap C^1([0,T];L^2(\Omega)) \cap H^2(0,T;[H_0^1(\Omega)]').
\end{equation}
With the auxiliary unknown~$V := \partial_t U$, problem~\eqref{eq:1} can also be formulated as the first-order-in-time system
\begin{equation} \label{eq:4}
	\begin{cases}
		\partial_t U - V=0 & \text{in } Q_T, 
		\\ \partial_t V - \text{div}_{\bx} (c^2 \nabla_{\bx}  U)  =  F & \text{in } Q_T, 
        \\ U = 0 & \text{on~} \partial \Omega \times (0,T),
		\\ U(\cdot,0) = U_0, \quad V(\cdot,0) = V_0 & \text{in~} \Omega.
	\end{cases}
\end{equation}
With $U$ as in \eqref{eq:3}, we have
\begin{equation*}
    V = \partial_t U \in C([0,T];L^2(\Omega)) \cap H^1(0,T;[H_0^1(\Omega)]').
\end{equation*}
Before formulating space--time tensor product discretizations of problem~\eqref{eq:1}, we introduce some notation.

\subsection{Discretization setup} \label{sec:22}
\noindent
 For the spatial discretization, let us consider a space~$S_{h_{\bx}}^{p_{\bx}}(\Omega) \subset H_0^1(\Omega)$ of piecewise polynomial functions of degree at most~$p_{\bx}\ge 1$ on a shape-regular, conforming simplicial mesh with mesh size~$h_{\bx}$. For the temporal discretization, we introduce the space~$S_{h_t}^{p_t}(0,T)$ of piecewise continuous polynomials of degree~$p_t \ge 1$ associated with a mesh of~$(0,T)$ with mesh size~$h_t$, defined by the nodes~$0=:t_0<t_1<\ldots<t_{N_t}:=T$ for $N_t \in \N$. We denote the subspaces of~$S_{h_t}^{p_t}(0,T)$ incorporating zero initial and final conditions, respectively, by~$S_{h_t,0,\bullet}^{p_t}(0,T)$ and~$S_{h_t,\bullet,0}^{p_t}(0,T)$. Furthermore, we define the tensor product spaces
\begin{align*}
    Q_{\bh}^{p_t}(Q_T) & := S_{h_{\bx}}^{p_{\bx}}(\Omega) \otimes S_{h_t}^{p_t}(0,T),
    \\ \partial_t Q_{\bh}^{p_t}(Q_T) & := S_{h_{\bx}}^{p_{\bx}}(\Omega) \otimes \partial_t S_{h_t}^{p_t}(0,T),~
    \\ Q_{\bh,0,\bullet}^{p_t}(Q_T) & := S_{h_{\bx}}^{p_{\bx}}(\Omega) \otimes S_{h_t,0,\bullet}^{p_t}(0,T),
    \\ Q_{\bh,\bullet,0}^{p_t}(Q_T)~ & := S_{h_{\bx}}^{p_{\bx}}(\Omega) \otimes S_{h_t,\bullet,0}^{p_t}(0,T).
\end{align*}
We emphasize that, in our notation for tensor-product discrete spaces and functions, the polynomial degree is specified only for the time discretization. This convention reflects the fact that, while the same discrete spaces are used for both test and trial functions in space, different polynomial degrees are used for them in time.

\noindent
Let $\Pi^{(p_t-1,-1)}_{h_t} :L^2(0,T)\to \partial_t S_{h_t}^{p_t}(0,T)$ be the $L^2$ orthogonal projection. We denote by the same symbol~$\Pi^{(p_t-1,-1)}_{h_t}$ both this operator and its space--time extension, mapping from $L^2(Q_T)$ into $L^2(\Omega) \otimes \partial_t S_{h_t}^{p_t}(0,T)$, which is defined, for $U \in L^2(Q_T)$, by
\begin{equation} \label{eq:5}
    (\Pi_{h_t}^{(p_t-1,-1)} U, \partial_t W_{h_t}^{p_t})_{L^2(Q_T)} = (U, \partial_t W^{p_t}_{h_t})_{L^2(Q_T)} \qquad \text{for all~} W_{h_t}^{p_t} \in L^2(\Omega) \otimes  S_{h_t}^{p_t}(0,T).
\end{equation}
Finally, we let $U_{0,h_{\bx}}, \, V_{0,h_{\bx}}\in S^{p_{\bx}}_{h_{\bx}}(\Omega)$ be approximations of the initial data $U_0$ and $V_0$, which we define as the elliptic and the $L^2$ projection onto~$S_{h_{\bx}}^{p_{\bx}}(\Omega)$, respectively:
\begin{align*}
   (c^2\nabla_{\bx} U_{0,h_{\bx}}, \nabla_{\bx}W_{h_{\bx}})_{L^2(\Omega)} & =(c^2\nabla_{\bx} U_0, \nabla_{\bx}W_{h_{\bx}})_{L^2(\Omega)} \qquad \text{for all~} W_{h_{\bx}}\in S^{p_{\bx}}_{h_{\bx}}(\Omega),
   \\  (V_{0,h_{\bx}},W_{h_{\bx}})_{L^2(\Omega)} & =(V_0,W_{h_{\bx}})_{L^2(\Omega)}\qquad\qquad\quad\ \ \text{for all~}  W_{h_{\bx}}\in S^{p_{\bx}}_{h_{\bx}}(\Omega).
\end{align*}

\subsection{Discrete formulation} \label{sec:23}

We consider the following discretization of problem \eqref{eq:1},
which is a slight modification of the method introduced in~\cite{Zank2021} (see Remark~\ref{rem:21} below):
\begin{tcolorbox}[
    colframe=black!50!white,
    colback=blue!5!white,
    boxrule=0.5mm,
    sharp corners,
    boxsep=0.5mm,
    top=0.5mm,
    bottom=0.5mm,
    right=0.25mm,
    left=0.1mm
]
    \begingroup
    \setlength{\abovedisplayskip}{0pt}
    \setlength{\belowdisplayskip}{0pt}
\, find $U_{\bh}^{p_t} \in Q^{p_t}_{\bh}(Q_T)$ such that $U_{\bh}^{p_t}(\cdot,0) = U_{0,h_{\bx}}$ in~$\Omega$ and\medskip
\begin{equation} \label{eq:6}
\begin{aligned}
	-(\partial_t U_{\bh}^{p_t},  \partial_t W_{\bh}^{p_t})_{L^2(Q_T)} + (c^2 \nabla_{\bx} U_{\bh}^{p_t} , &\nabla_{\bx} \Pi_{h_t}^{(p_t-1,-1)} W_{\bh}^{p_t})_{L^2(Q_T)} 
    \\ & 
     = (F,\Pi_{h_t}^{(p_t-1,-1)} W_{\bh}^{p_t})_{L^2(Q_T)} + (V_{0,h_{\bx}},W_{\bh}^{p_t}(\cdot,0))_{L^2(\Omega)}
\end{aligned}
\end{equation}

\medskip\noindent
\, for all $W_{\bh}^{p_t} \in Q^{p_t}_{\bh,\bullet,0}(Q_T)$.
    \endgroup
\end{tcolorbox}
\noindent
In Section~\ref{sec:24} below, we prove that this method can be equivalently reformulated as the discrete first-order-in-time formulation analyzed in \cite{BalesLasiecka1994, FrenchPeterson1996, Gomez2025} (see~\eqref{eq:10} below). Leveraging this equivalence, stability and convergence properties of formulation~\eqref{eq:6} follow directly from those of the first-order-in-time scheme. 

\begin{remark}[Unstabilized scheme and stabilized scheme of~\cite{Zank2021}] \label{rem:21}
By testing \eqref{eq:1} with a discrete function $W_{\bh}^{p_t} \in Q_{\bh,\bullet,0}^{p_t}(Q_T)$, integrating by parts both in space and time, and imposing the initial condition for $\partial_t U_{\bh}^{p_t}$ in a weak sense, one obtains the following discrete formulation:  find $U_{\bh}^{p_t} \in Q^{p_t}_{\bh}(Q_T)$ such that $U_{\bh}^{p_t}(\cdot,0) = U_{0,h_{\bx}}$ in $\Omega$ and
\begin{equation} \label{eq:7}
\begin{aligned}
	-(\partial_t U_{\bh}^{p_t}, \partial_t W_{\bh}^{p_t})_{L^2(Q_T)} &  + (c^2 \nabla_{\bx} U_{\bh}^{p_t}, \nabla_{\bx}  W_{\bh}^{p_t})_{L^2(Q_T)}
    %\\ &
     = (F,W_{\bh}^{p_t})_{L^2(Q_T)} + (V_0,W_{\bh}^{p_t}(\cdot,0))_{L^2(\Omega)}
\end{aligned}    
\end{equation}
for all $W_{\bh}^{p_t} \in Q^{p_t}_{\bh,\bullet,0}(Q_T)$.
It was established in~\cite{SteinbachZank2020} that method~\eqref{eq:7} is stable if and only if a CFL condition of type $h_t \le C h_{\bx}$ is satisfied, where the constant $C>0$ depends on the space $S^{p_{\bx}}_{h_{\bx}}(\Omega)$, on $p_t$, and on $T$ (see~\cite{FerrariFraschini2024} for the case of discretization in time with maximal regularity splines). To overcome this restriction, it was proposed in~\cite{Zank2021} to insert the projection operator~$\Pi^{(p_t-1,-1)}_{h_t}$ in the grad-grad term. The resulting method was numerically shown to be effective and unconditionally stable.

\smallskip
\noindent
The only difference between the method in~\eqref{eq:6} and that in~\cite{Zank2021} is in the presence of the operator~$\Pi^{(p_t-1,-1)}_{h_t}$ on the right-hand side of the former, which is not included in the latter. As such, the stability results established for~\eqref{eq:6} immediately extend to the method in~\cite{Zank2021}, providing a theoretical justification for the unconditional stability observed there.
\eremk
\end{remark}

\noindent
The scheme in~\eqref{eq:6} has the advantage, with respect to the first-order-in-time scheme in \cite{BalesLasiecka1994, FrenchPeterson1996}, that it involves only the wave field as an unknown, thereby halving the number of unknowns. Moreover, it allows the use of the efficient solver recently developed in~\cite{Zank2025}.

\smallskip
\noindent
While the computation of the projection~$\Pi_{h_t}^{(p_t-1,-1)}$ in the left-hand side
in~\eqref{eq:6} would typically require introducing an auxiliary variable, we show how this can be avoided, leading to a more efficient implementation. This follows from interpreting formulation~\eqref{eq:6} as derived from~\eqref{eq:7} by applying a subquadrature in time to the grad-grad term, as made precise by the following result. 

\begin{proposition}
For all $u_{h_t}^{p_t}, v_{h_t}^{p_t} \in S_{h_t}^{p_t}(0,T)$, it holds true that
\begin{equation} \label{eq:8}
    (u_{h_t}^{p_t}, \Pi_{h_t}^{(p_t-1,-1)} v_{h_t}^{p_t})_{L^2(t_{i-1},t_i)} = \sum_{j=1}^{p_t} u_{h_t}^{p_t}(x_j^{(i)}) v_{h_t}^{p_t}(x_j^{(i)}) \omega_j^{(i)}, \quad \quad i =1,\ldots,N_t,
\end{equation}
where $\{x_j^{(i)}\}_{j=1}^{p_t}$ and $\{\omega_j^{(i)}\}_{j=1}^{p_t}$ are nodes and weights of Gauss--Legendre quadrature with $p_t$ points in $[t_{i-1},t_i]$. 
\end{proposition}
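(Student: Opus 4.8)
The plan is to reduce~\eqref{eq:8} to a single-element identity and then invoke two classical facts about Gauss--Legendre quadrature. First I would show that, although $\Pi_{h_t}^{(p_t-1,-1)}$ is defined as a \emph{global} $L^2$-projection onto $\partial_t S_{h_t}^{p_t}(0,T)$, it acts element by element: on each $I_i := (t_{i-1},t_i)$ it coincides with the $L^2(I_i)$-orthogonal projection $\pi_i$ onto $\mathbb{P}_{p_t-1}(I_i)$. To prove this I would, for a given $q \in \mathbb{P}_{p_t-1}(I_i)$, construct $W \in S_{h_t}^{p_t}(0,T)$ equal to $0$ on $(0,t_{i-1})$, to $t \mapsto \int_{t_{i-1}}^{t} q(s)\,\dd s$ on $I_i$, and to the constant $\int_{t_{i-1}}^{t_i} q(s)\,\dd s$ on $(t_i,T)$; this $W$ is globally continuous and piecewise of degree $\le p_t$, so $W \in S_{h_t}^{p_t}(0,T)$, and $\partial_t W = q\,\mathbf{1}_{I_i}$. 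Inserting this $W$ into the temporal instance of~\eqref{eq:5} yields $\int_{I_i}(\Pi_{h_t}^{(p_t-1,-1)} v)\,q\,\dd t = \int_{I_i} v\,q\,\dd t$ for every $q \in \mathbb{P}_{p_t-1}(I_i)$; since $(\Pi_{h_t}^{(p_t-1,-1)} v)|_{I_i}$ is a polynomial of degree $\le p_t-1$ (it is the restriction to $I_i$ of an element of $\partial_t S_{h_t}^{p_t}(0,T)$), this identifies it with $\pi_i(v|_{I_i})$. Equivalently, one may observe that $\partial_t S_{h_t}^{p_t}(0,T)$ is the full space of discontinuous piecewise polynomials of degree $\le p_t-1$ (both spaces have dimension $N_t p_t$), so the projection decouples across the time mesh.

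Then I would use that the $p_t$-point Gauss--Legendre rule on $I_i$ is exact on $\mathbb{P}_{2p_t-1}(I_i)$ and that its nodes $x_1^{(i)},\dots,x_{p_t}^{(i)}$ are the roots of the degree-$p_t$ Legendre polynomial associated with $I_i$. For $u \in S_{h_t}^{p_t}(0,T)$, the restriction $u|_{I_i}$ lies in $\mathbb{P}_{p_t}(I_i)$, hence $u|_{I_i} - \pi_i u$ is a degree-$\le p_t$ polynomial that is $L^2(I_i)$-orthogonal to $\mathbb{P}_{p_t-1}(I_i)$, therefore a scalar multiple of that Legendre polynomial, and so it vanishes at every node: $u(x_j^{(i)}) = (\pi_i u)(x_j^{(i)})$, and likewise $v(x_j^{(i)}) = (\pi_i v)(x_j^{(i)})$. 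This gives
\begin{align*}
\sum_{j=1}^{p_t} u(x_j^{(i)})\,v(x_j^{(i)})\,\omega_j^{(i)}
&= \sum_{j=1}^{p_t} (\pi_i u)(x_j^{(i)})\,(\pi_i v)(x_j^{(i)})\,\omega_j^{(i)} \\
&= \int_{I_i}(\pi_i u)(\pi_i v)\,\dd t = (u,\pi_i v)_{L^2(I_i)} = (u,\Pi_{h_t}^{(p_t-1,-1)} v)_{L^2(I_i)},
\end{align*}
where the second equality uses exactness on the degree-$(2p_t-2)$ product $(\pi_i u)(\pi_i v)$, the third uses $u - \pi_i u \perp \mathbb{P}_{p_t-1}(I_i) \ni \pi_i v$ together with self-adjointness of $\pi_i$, and the last uses the localization from the first step. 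This is precisely~\eqref{eq:8}.

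I expect the only genuinely nontrivial point to be the localization: a priori $\Pi_{h_t}^{(p_t-1,-1)}$ projects onto the globally coupled space $\partial_t S_{h_t}^{p_t}(0,T)$, and one must argue that this space separates the time elements — via the explicit $W$ above, or via the dimension count. Everything afterwards is textbook Gaussian quadrature combined with the observation that the $p_t$-point Gauss--Legendre rule is ``blind'' to the degree-$p_t$ Legendre component of a function in $S_{h_t}^{p_t}(0,T)$, which is exactly what makes~\eqref{eq:8} hold with $u,v$ of degree $p_t$ rather than only $p_t-1$.
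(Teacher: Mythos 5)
Your proof is correct and follows essentially the same route as the paper's: both rest on the exactness of the $p_t$-point Gauss--Legendre rule for polynomials of degree up to $2p_t-1$ and on the fact that $v_{h_t}^{p_t}$ and its projection agree at the Gauss--Legendre nodes, because their difference is proportional to the degree-$p_t$ Legendre polynomial whose zeros are exactly those nodes. The only substantive addition is that you justify explicitly (via the constructed test function $W$, or the dimension count) that the global projection $\Pi_{h_t}^{(p_t-1,-1)}$ localizes to the elementwise $L^2$ projection onto $\mathbb{P}_{p_t-1}(t_{i-1},t_i)$, a step the paper uses implicitly when invoking orthogonality of the Legendre expansion on each interval.
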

\begin{proof}
As the Gauss--Legendre quadrature formula with~$p_t$ points is exact for polynomials up to degree $2p_t-1$, we have
\begin{equation*}
    (u_{h_t}^{p_t}, \Pi_{h_t}^{(p_t-1,-1)} v_{h_t}^{p_t})_{L^2(t_{i-1},t_i)} = \sum_{j=1}^{p_t} u_{h_t}^{p_t}(x_j^{(i)}) \Pi_{h_t}^{(p_t-1,-1)} v_{h_t}^{p_t}(x_j^{(i)}) \omega_j^{(i)}.
\end{equation*}
To obtain the result, we only need to show that, for all $j=1,\ldots,p_t$ and $i=1,\ldots,N_t$, 
\begin{equation} \label{eq:9}
    v_{h_t}^{p_t}(x_j^{(i)}) = \Pi_{h_t}^{(p_t-1,-1)} v_{h_t}^{p_t}(x_j^{(i)}).
\end{equation}
Let $\{L_j^{(i)}(t)\}_{j \ge 0}$ be the standard Legendre polynomials in $[t_{i-1},t_i]$. Then, from their orthogonality, it follows that
\begin{equation*}
    v_{h_t}^{p_t}(t) - \Pi_{h_t}^{(p_t-1,-1)} v_{h_t}^{p_t}(t) = L_{p_t}^{(i)}(t) \frac{(v_{h_t}^{p_t}, L_{p_t}^{(i)})_{L^2(t_{i-1},t_i)}}{(L_{p_t}^{(i)}, L_{p_t}^{(i)})_{L^2(t_{i-1},t_i)}}, \quad t \in [t_{i-1},t_i].
\end{equation*}
Recalling that $\{x_j^{(i)}\}_{j=1}^{p_t}$ are exactly the zeros of $L_{p_t}^{(i)}(t)$, we deduce \eqref{eq:9}, and thus \eqref{eq:8}.
\end{proof}

\subsection{Equivalence result} \label{sec:24}

The space--time DG--CG discretization of the linear wave equation introduced in \cite{BalesLasiecka1994, FrenchPeterson1996}, which is based on the first-order-in-time formulation~\eqref{eq:4}, reads as follows: 

\begin{tcolorbox}[
    colframe=black!50!white,
    colback=blue!5!white,
    boxrule=0.5mm,
    sharp corners,
    boxsep=0.5mm,
    top=0.5mm,
    bottom=0.5mm,
    right=1mm,
    left=1mm
]
    \begingroup
    \setlength{\abovedisplayskip}{0pt}
    \setlength{\belowdisplayskip}{0pt}
\, find $\widetilde{U}_{\bh}^{p_t}, \widetilde{V}_{\bh}^{p_t} \in Q_{\bh}^{p_t}(Q_T)$ such that $\widetilde{U}_{\bh}^{p_t}(\cdot,0) = U_{0,h_{\bx}}$, $ \widetilde{V}_{\bh}^{p_t}(\cdot,0) = V_{0,h_{\bx}}$ in~$\Omega$, and \medskip
\begin{equation} \label{eq:10}
	\begin{cases}
		(\partial_t \widetilde{U}_{\bh}^{p_t}, \chi_{\bh}^{p_t-1})_{L^2(Q_T)} - (\widetilde{V}_{\bh}^{p_t}, \chi_{\bh}^{p_t-1})_{L^2(Q_T)}=0 \vspace{0.05cm} \\
		(\partial_t \widetilde{V}_{\bh}^{p_t}, \lambda_{\bh}^{p_t-1})_{L^2(Q_T)} + (c^2 \nabla_{\bx} \widetilde{U}_{\bh}^{p_t}, \nabla_{\bx} \lambda_{\bh}^{p_t-1})_{L^2(Q_T)} = (F, \lambda_{\bh}^{p_t-1})_{L^2(Q_T)} 
	\end{cases}
\end{equation}

\medskip
\noindent
\, for all $\chi_{\bh}^{p_t-1},\,\lambda_{\bh}^{p_t-1} \in \partial_t Q_{\bh}^{p_t}(Q_T)$.
    \endgroup
\end{tcolorbox}
\medskip
\noindent
In Theorem~\ref{thm:25} below, we establish an equivalence result between the discrete problems \eqref{eq:6} and \eqref{eq:10}. Before doing so, we rewrite problem \eqref{eq:6} in system form (Lemma~\ref{lem:23}), and define reconstructions in~$Q_{\bh}^{p_t}(Q_T)$ of functions in~$\partial_t Q_{\bh}^{p_t}(Q_T)$ with given initial data (Lemma~\ref{lem:24}).
\begin{lemma} \label{lem:23}
Consider the problem: find $U_{\bh}^{p_t} \in Q_{\bh}^{p_t}(Q_T)$ and $V_{\bh}^{p_t-1} \in \partial_t Q_{\bh}^{p_t}(Q_T)$ such that
\begin{equation} \label{eq:11}
	\begin{cases}
    (\partial_t U^{p_t}_{\bh}, \chi^{p_t-1}_{\bh})_{L^2(Q_T)} - (V_{\bh}^{p_t-1}, \chi_{\bh}^{p_t-1})_{L^2(Q_T)} = 0 & \text{for all~} \chi_{\bh}^{p_t-1} \in  \partial_t Q_{\bh}^{p_t}(Q_T), \vspace{0.05cm}
    \\ -(V_{\bh}^{p_t-1},  \partial_t W_{\bh}^{p_t})_{L^2(Q_T)} + (c^2 \nabla_{\bx} U_{\bh}^{p_t} , \nabla_{\bx} \Pi_{h_t}^{(p_t-1,-1)} W_{\bh}^{p_t})_{L^2(Q_T)} & \vspace{0.05cm}
    \\ \hspace{1cm} = (F,\Pi_{h_t}^{(p_t-1,-1)} W_{\bh}^{p_t})_{L^2(Q_T)}  + (V_{0,h_{\bx}},W_{\bh}^{p_t}(\cdot,0))_{L^2(\Omega)} & \text{for all~} W_{\bh}^{p_t} \in Q^{p_t}_{\bh,\bullet,0}(Q_T)
	\end{cases}
\end{equation}
with the initial condition $U_{\bh}^{p_t}(\cdot,0) = U_{0,h_{\bx}}$ in~$\Omega$. Then, \eqref{eq:6} is equivalent to \eqref{eq:11}, in the sense that $U_{\bh}^{p_t}$ solves~\eqref{eq:6} if and only if $U_{\bh}^{p_t}$ and $V_{\bh}^{p_t-1}= \partial_t U_{\bh}^{p_t}$ solve \eqref{eq:11}.
\end{lemma}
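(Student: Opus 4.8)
The plan is to prove the two implications separately, the underlying point being that, on the discrete level, the first equation of \eqref{eq:11} is nothing but a reformulation of the identity $V_{\bh}^{p_t-1} = \partial_t U_{\bh}^{p_t}$.

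For the forward implication I would take a solution $U_{\bh}^{p_t}$ of \eqref{eq:6} and simply \emph{define} $V_{\bh}^{p_t-1} := \partial_t U_{\bh}^{p_t}$. Since $U_{\bh}^{p_t} \in Q_{\bh}^{p_t}(Q_T) = S_{h_{\bx}}^{p_{\bx}}(\Omega)\otimes S_{h_t}^{p_t}(0,T)$, differentiating in time a tensor-product representation shows $\partial_t U_{\bh}^{p_t} \in S_{h_{\bx}}^{p_{\bx}}(\Omega)\otimes \partial_t S_{h_t}^{p_t}(0,T) = \partial_t Q_{\bh}^{p_t}(Q_T)$, so $V_{\bh}^{p_t-1}$ is an admissible unknown. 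With this choice the first equation of \eqref{eq:11} holds trivially, and replacing $\partial_t U_{\bh}^{p_t}$ by $V_{\bh}^{p_t-1}$ in the first term of \eqref{eq:6} turns \eqref{eq:6} verbatim into the second equation of \eqref{eq:11}; the initial condition $U_{\bh}^{p_t}(\cdot,0) = U_{0,h_{\bx}}$ is common to both. Hence the pair $(U_{\bh}^{p_t}, V_{\bh}^{p_t-1})$ solves \eqref{eq:11}.

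For the converse, I would start from a pair $(U_{\bh}^{p_t}, V_{\bh}^{p_t-1})$ solving \eqref{eq:11}. The crucial step is to show $V_{\bh}^{p_t-1} = \partial_t U_{\bh}^{p_t}$: both functions lie in $\partial_t Q_{\bh}^{p_t}(Q_T)$, hence so does their difference, and choosing $\chi_{\bh}^{p_t-1} = \partial_t U_{\bh}^{p_t} - V_{\bh}^{p_t-1}$ in the first equation of \eqref{eq:11} yields $\|\partial_t U_{\bh}^{p_t} - V_{\bh}^{p_t-1}\|_{L^2(Q_T)}^2 = 0$. Substituting $V_{\bh}^{p_t-1} = \partial_t U_{\bh}^{p_t}$ back into the second equation of \eqref{eq:11} recovers \eqref{eq:6}, the initial condition again being carried over unchanged.

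There is no genuine analytical obstacle here: the statement is essentially a bookkeeping identity, and the main work is to phrase the reduction precisely. The only points that need to be made explicit are (i) that $\partial_t$ maps $Q_{\bh}^{p_t}(Q_T)$ into $\partial_t Q_{\bh}^{p_t}(Q_T)$, so that $\partial_t U_{\bh}^{p_t}$ is a legitimate test function in the first line of \eqref{eq:11}, and (ii) that the weakly imposed initial condition for the time derivative (the $(V_{0,h_{\bx}}, W_{\bh}^{p_t}(\cdot,0))_{L^2(\Omega)}$ term) together with the strongly imposed condition $U_{\bh}^{p_t}(\cdot,0) = U_{0,h_{\bx}}$ appear identically in \eqref{eq:6} and \eqref{eq:11}, so they require no argument.
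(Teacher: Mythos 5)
Your proposal is correct and follows essentially the same route as the paper, which simply observes that the first equation of \eqref{eq:11} encodes the identity $V_{\bh}^{p_t-1}=\partial_t U_{\bh}^{p_t}$ and declares the rest straightforward. You merely spell out the two details the paper leaves implicit (that $\partial_t U_{\bh}^{p_t}\in\partial_t Q_{\bh}^{p_t}(Q_T)$, and that testing with the difference forces $V_{\bh}^{p_t-1}=\partial_t U_{\bh}^{p_t}$), both of which are handled correctly.
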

\begin{proof}
As the first equation in~\eqref{eq:11} is simply the identity~$\partial_t U^{p_t}_{\bh}=V_{\bh}^{p_t-1}$, the proof is straightforward.
\end{proof}
\begin{lemma} \label{lem:24}
Given~$V_{\bh}^{p_t-1}\in \partial_t Q_{\bh}^{p_t}(Q_T)$ and~$V_{0,h_{\bx}}\in S_{h_{\bx}}^{p_{\bx}}(\Omega)$, there exists a unique  $\widetilde{V}_{\bh}^{p_t} \in Q_{\bh}^{p_t}(Q_T)$ such that $\widetilde{V}_{\bh}^{p_t}(\cdot,0) = V_{0,h_{\bx}}$ and
\begin{equation*}
    (\widetilde{V}_{\bh}^{p_t}, \partial_t W_{\bh}^{p_t})_{L^2(Q_T)} = (V_{\bh}^{p_t-1}, \partial_t W_{\bh}^{p_t})_{L^2(Q_T)} \quad \text{for all~} W_{\bh}^{p_t} \in Q_{\bh}^{p_t}(Q_T).
\end{equation*}
\end{lemma}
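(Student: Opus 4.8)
The plan is to use the tensor-product structure to reduce the statement to a purely temporal assertion, and then prove that assertion via a dimension count together with a moment-matching argument. First I would write $V_{\bh}^{p_t-1}$ and the sought $\widetilde V_{\bh}^{p_t}$ in terms of a basis of $S_{h_{\bx}}^{p_{\bx}}(\Omega)$, say $V_{\bh}^{p_t-1}(\bx,t) = \sum_k \phi_k(\bx)\, v_k(t)$ with $v_k \in \partial_t S_{h_t}^{p_t}(0,T)$, and analogously $\widetilde V_{\bh}^{p_t}(\bx,t) = \sum_k \phi_k(\bx)\, \widetilde v_k(t)$ with $\widetilde v_k \in S_{h_t}^{p_t}(0,T)$. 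Testing the identity against $W_{\bh}^{p_t} = \phi_\ell(\bx)\, w(t)$ and using that the spatial Gram matrix $M_{k\ell} = (\phi_k,\phi_\ell)_{L^2(\Omega)}$ is symmetric positive definite, the system decouples: the claim is equivalent to showing that for each fixed index, given $v \in \partial_t S_{h_t}^{p_t}(0,T)$ and a prescribed value $v_{0}\in\R$, there is a unique $\widetilde v \in S_{h_t}^{p_t}(0,T)$ with $\widetilde v(0) = v_0$ and $(\widetilde v, \partial_t w)_{L^2(0,T)} = (v, \partial_t w)_{L^2(0,T)}$ for all $w \in S_{h_t}^{p_t}(0,T)$. (One small point to record: since $W_{\bh}^{p_t}$ ranges over all of $Q_{\bh}^{p_t}(Q_T)$, not the subspace with zero final condition, the temporal test space is the full $S_{h_t}^{p_t}(0,T)$.)

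For the temporal statement I would argue as follows. Uniqueness: if $\widetilde v$ satisfies the homogeneous version ($\widetilde v(0)=0$ and $(\widetilde v,\partial_t w)=0$ for all $w \in S_{h_t}^{p_t}(0,T)$), then in particular $\partial_t \widetilde v \in \partial_t S_{h_t}^{p_t}(0,T)$ is an admissible test derivative, so choosing $w$ with $\partial_t w = \partial_t \widetilde v$ gives $\|\partial_t \widetilde v\|_{L^2(0,T)}^2 = 0$, hence $\widetilde v$ is (piecewise) constant; continuity across nodes forces $\widetilde v$ to be globally constant, and $\widetilde v(0)=0$ then gives $\widetilde v \equiv 0$. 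Existence: the map $\widetilde v \mapsto \big(\widetilde v(0),\ \partial_t \widetilde v\big)$ from $S_{h_t}^{p_t}(0,T)$ to $\R \times \partial_t S_{h_t}^{p_t}(0,T)$ is linear; it is injective by the same argument just given (the constants are killed only by the first component), and a dimension count, $\dim S_{h_t}^{p_t}(0,T) = 1 + \dim \partial_t S_{h_t}^{p_t}(0,T)$ (differentiation kills exactly the one-dimensional space of global constants and is otherwise onto $\partial_t S_{h_t}^{p_t}(0,T)$ by definition), shows it is a bijection. Therefore there is a unique $\widetilde v \in S_{h_t}^{p_t}(0,T)$ with $\widetilde v(0) = v_0$ and $\partial_t \widetilde v = \Pi\big(\partial_t\text{-part of }v\big)$; but since $v \in \partial_t S_{h_t}^{p_t}(0,T)$ already, we want precisely $\partial_t \widetilde v$ to equal the $L^2(0,T)$-orthogonal projection of... — more cleanly: the condition $(\widetilde v - v, \partial_t w)_{L^2(0,T)} = 0$ for all $w$ says that $\partial_t \widetilde v - v \perp \partial_t S_{h_t}^{p_t}(0,T)$ in $L^2(0,T)$; since both $\partial_t \widetilde v$ and $v$ lie in $\partial_t S_{h_t}^{p_t}(0,T)$, their difference lies in that space and is orthogonal to it, hence vanishes, i.e. the condition is exactly $\partial_t \widetilde v = v$. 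So existence and uniqueness reduce to: every element of $\partial_t S_{h_t}^{p_t}(0,T)$ is the derivative of exactly one element of $S_{h_t}^{p_t}(0,T)$ with prescribed value at $t=0$ — which is immediate by integration, $\widetilde v(t) := v_0 + \int_0^t v(s)\,\dd s$, noting that this $\widetilde v$ is continuous and piecewise polynomial of degree $p_t$, hence lies in $S_{h_t}^{p_t}(0,T)$.

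Reassembling, I would then set $\widetilde V_{\bh}^{p_t}(\bx,t) := V_{0,h_{\bx}}(\bx) + \int_0^t V_{\bh}^{p_t-1}(\bx,s)\,\dd s$; this lies in $Q_{\bh}^{p_t}(Q_T)$ because $V_{\bh}^{p_t-1}(\bx,\cdot) \in \partial_t S_{h_t}^{p_t}(0,T)$ for each $\bx$ (equivalently, expand in the spatial basis and apply the scalar construction coefficient-wise), satisfies the initial condition by construction, and satisfies the variational identity because $\partial_t \widetilde V_{\bh}^{p_t} = V_{\bh}^{p_t-1}$ pointwise. Uniqueness follows by linearity from the scalar uniqueness argument applied coefficient-wise, using positive-definiteness of the spatial mass matrix. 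The only mildly delicate point is the observation that the identity $(\widetilde v - v, \partial_t w)_{L^2(0,T)} = 0$ for all test $w$ is equivalent to $\partial_t \widetilde v = v$ rather than merely to some projection of it — this works precisely because $V_{\bh}^{p_t-1}$ is assumed to live in $\partial_t Q_{\bh}^{p_t}(Q_T)$ to begin with, so no genuine projection error is incurred; I expect this to be the one place a reader might otherwise stumble, so I would state it explicitly. Everything else is routine.
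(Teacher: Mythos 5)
Your opening reduction (tensor--product decoupling via the positive definite spatial mass matrix, followed by an ``injective map between spaces of equal dimension'' argument for the temporal problem) is exactly the structure of the paper's proof. However, the core of your argument contains a genuine error: you misread the variational condition. The identity $(\widetilde v - v, \partial_t w)_{L^2(0,T)} = 0$ for all $w \in S_{h_t}^{p_t}(0,T)$ says that $\widetilde v - v$ --- \emph{not} $\partial_t \widetilde v - v$ --- is $L^2$-orthogonal to $\partial_t S_{h_t}^{p_t}(0,T)$. Since $\widetilde v$ is a continuous piecewise polynomial of degree $p_t$, it does not lie in $\partial_t S_{h_t}^{p_t}(0,T)$ (discontinuous, degree $p_t-1$), so you cannot conclude that the difference ``lies in that space and is orthogonal to it, hence vanishes.'' The condition is equivalent to $\Pi_{h_t}^{(p_t-1,-1)}\widetilde v = v$, not to $\partial_t\widetilde v = v$; this is precisely how the reconstruction is used in Theorem~\ref{thm:25} (see~\eqref{eq:15}). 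Your explicit formula $\widetilde v(t) = v_0 + \int_0^t v(s)\,\dd s$ therefore does not solve the problem: take $p_t=1$, a single element $[0,1]$, $v\equiv 1$, $v_0=0$; your candidate is $\widetilde v(t)=t$, and testing against the constant $\partial_t w \equiv 1$ gives $\int_0^1 t\,\dd t = \tfrac12 \neq 1 = \int_0^1 v\,\dd t$, while the correct reconstruction is $\widetilde v(t)=2t$. Your uniqueness step fails for the same reason: choosing $w$ with $\partial_t w = \partial_t\widetilde v$ yields $(\widetilde v,\partial_t\widetilde v)_{L^2(0,T)} = \tfrac12\,\widetilde v(T)^2$ (using $\widetilde v(0)=0$), not $\|\partial_t\widetilde v\|_{L^2(0,T)}^2$, so you only learn $\widetilde v(T)=0$.

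What is actually needed, and what the paper supplies, is the injectivity of the map $\widetilde v \mapsto \bigl(\widetilde v(0),\,\Pi_{h_t}^{(p_t-1,-1)}\widetilde v\bigr)$. The paper's argument: if $\widetilde v(0)=0$ and $(\widetilde v,\partial_t w)_{L^2(0,T)}=0$ for all $w$, then on each element $[t_{i-1},t_i]$ the restriction of $\widetilde v$ is a degree-$p_t$ polynomial orthogonal to all polynomials of degree $\le p_t-1$, hence a scalar multiple of the Legendre polynomial $L_{p_t}^{(i)}$; since the zeros of $L_{p_t}^{(i)}$ lie strictly inside the element, the condition $\widetilde v(t_{i-1})=0$ (propagated sequentially from $t=0$ by continuity) forces that multiple to be zero. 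Your dimension count $\dim S_{h_t}^{p_t}(0,T) = 1 + \dim \partial_t S_{h_t}^{p_t}(0,T)$ is correct and does reduce existence to this injectivity, but the injectivity itself is the substantive step and your proposal does not establish it.
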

\begin{proof}
    Uniqueness, which also implies existence, is shown by verifying that, if $\widetilde{V}_{\bh}^{p_t}(\cdot,0) = 0$ and
\begin{equation} \label{eq:16}
    (\widetilde{V}_{\bh}^{p_t}, \partial_t W_{\bh}^{p_t})_{L^2(Q_T)} = 0 \quad \text{for all~} W_{\bh}^{p_t} \in Q_{\bh}^{p_t}(Q_T),
\end{equation}
then~$\widetilde{V}_{\bh}^{p_t} = 0$. In order to do so, we observe that the system matrix associated with \eqref{eq:16} has a tensor product structure, composed of a positive definite matrix in space, arising from the $L^2(\Omega)$ inner product, and a matrix in time, associated with the problem
\begin{equation} \label{eq:17}
    \text{find~$\widetilde{v}_{h_t}^{p_t}\in S_{h_t,0,\bullet}^{p_t}(0,t)$ such that}\qquad (\widetilde{v}_{h_t}^{p_t}, \partial_t w_{h_t}^{p_t})_{L^2(0,T)} = 0 \quad \text{for all~} w_{h_t}^{p_t} \in S_{h_t}^{p_t}(0,t),
\end{equation}
that can be also shown to be invertible. This invertibility can be established as in~\cite[Appendix B.1]{FerrariPerugia2025}. In detail, we expand~$\widetilde{v}_{h_t}^{p_t}$ in each interval~$[t_{i-1},t_i]$ of the time mesh as
\begin{equation*}
    \widetilde{v}_{h_t}^{p_t}(t) = \sum_{r=0}^{p_t} c_r^{(i)} L_r^{(i)}(t) \quad \text{for~} t \in [t_{i-1},t_i],
\end{equation*}
where $\{L_r^{(i)}\}_r$ are the Legendre polynomials on~$[t_{i-1},t_i]$, which are orthogonal with respect to the~$L^2(t_{i-1},t_i)$ scalar product, normalized such that $L_r^{(i)}(t_i)=1$. We then have 
\begin{equation*}
    c_r^{(i)} = \frac{(\widetilde{v}_{h_t}^{p_t},  L_r^{(i)})_{L^2(t_{i-1},t_i)}}{(L_r^{(i)},  L_r^{(i)})_{L^2(t_{i-1},t_i)}} \qquad \text{for~$r=0,\ldots, p_t$}.
\end{equation*}
We proceed sequentially, for~$i=1,\ldots,N_t$, testing~\eqref{eq:17} with all functions whose time derivatives coincide with the Legendre polynomials of degree up to~$p_t-1$ over~$[t_{i-1},t_i]$ and are zero outside~$[t_{i-1},t_i]$, and obtain that~$c_r^{(i)} = 0$ for $r=0,\ldots,p_t-1$. From the condition~$\widetilde{v}_{h_t}^{p_t}(t_{i-1})=0$, we also conclude that~$c_{p_t}^{(i)}=0$, and thus~$\widetilde{v}_{h_t}^{p_t}=0$ in~$[t_{i-1},t_i]$, since the zeros of an orthogonal polynomial lie strictly within the interval (see, e.g., \cite[Lemma 3.2]{Iserles2009}).
\end{proof}
\begin{theorem} \label{thm:25}
Let $\widetilde{U}_{\bh}^{p_t}, \widetilde{V}_{\bh}^{p_t} \in Q_{\bh}^{p_t}(Q_T)$ be a solution to the scheme in~\eqref{eq:10}. Then, $U_{\bh}^{p_t} = \widetilde{U}_{\bh}^{p_t} \in Q_{\bh}^{p_t}(Q_T)$
is a solution to \eqref{eq:6}. Conversely, let~$U_{\bh}^{p_t}\in Q_{\bh}^{p_t}(Q_T)$ be a solution of the scheme in~\eqref{eq:6}. Then,
\begin{equation*} 
    \widetilde{U}_{\bh}^{p_t}=U_{\bh}^{p_t}\in Q_{\bh}^{p_t}(Q_T), \quad \widetilde{V}_{\bh}^{p_t} \text{~such that} \quad \Pi_{h_t}^{(p_t-1,-1)} \widetilde{V}_{\bh}^{p_t} = \partial_t U_{\bh}^{p_t}, \quad  \widetilde{V}_{\bh}^{p_t}(\cdot,0) =V_{0,h_{\bx}}
\end{equation*}
is a solution to \eqref{eq:10}.
\end{theorem}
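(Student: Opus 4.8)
The plan is to reduce both implications to one algebraic computation built around the projection $\Pi:=\Pi_{h_t}^{(p_t-1,-1)}$, which by~\eqref{eq:5} is the $L^2(Q_T)$-orthogonal projection onto $L^2(\Omega)\otimes\partial_t S_{h_t}^{p_t}(0,T)$; hence $\Pi$ is self-adjoint, restricts to the identity on that space, and maps $Q_{\bh}^{p_t}(Q_T)$ into $\partial_t Q_{\bh}^{p_t}(Q_T)$, since it acts only in time. I will use two elementary facts. First, since the time components of $\partial_t\widetilde V_{\bh}^{p_t}$ and of $\partial_t W_{\bh}^{p_t}$ already lie in $\partial_t S_{h_t}^{p_t}(0,T)$, for every $g\in L^2(Q_T)$ we have $(\partial_t\widetilde V_{\bh}^{p_t},g)_{L^2(Q_T)}=(\partial_t\widetilde V_{\bh}^{p_t},\Pi g)_{L^2(Q_T)}$ and $(g,\partial_t W_{\bh}^{p_t})_{L^2(Q_T)}=(\Pi g,\partial_t W_{\bh}^{p_t})_{L^2(Q_T)}$. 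Second, for $W_{\bh}^{p_t}\in Q^{p_t}_{\bh,\bullet,0}(Q_T)$, integration by parts in time — with telescoping of the interior node values (both $\widetilde V_{\bh}^{p_t}$ and $W_{\bh}^{p_t}$ are continuous in time), the final-time contribution vanishing because $W_{\bh}^{p_t}(\cdot,T)=0$, and $\widetilde V_{\bh}^{p_t}(\cdot,0)=V_{0,h_{\bx}}$ — combined with the first fact gives
\[
    (\partial_t\widetilde V_{\bh}^{p_t},\Pi W_{\bh}^{p_t})_{L^2(Q_T)}
    =-(V_{0,h_{\bx}},W_{\bh}^{p_t}(\cdot,0))_{L^2(\Omega)}-(\Pi\widetilde V_{\bh}^{p_t},\partial_t W_{\bh}^{p_t})_{L^2(Q_T)}.
\]
Testing the first equation of~\eqref{eq:10} against $\chi_{\bh}^{p_t-1}\in\partial_t Q_{\bh}^{p_t}(Q_T)$ and inserting $\Pi$ shows it is equivalent to $\Pi\widetilde V_{\bh}^{p_t}=\partial_t\widetilde U_{\bh}^{p_t}$ (both sides lie in $\partial_t Q_{\bh}^{p_t}(Q_T)$). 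Substituting $\Pi\widetilde V_{\bh}^{p_t}=\partial_t\widetilde U_{\bh}^{p_t}$ into the displayed identity and then using that identity in the second equation of~\eqref{eq:10} tested with $\lambda_{\bh}^{p_t-1}=\Pi W_{\bh}^{p_t}$ reproduces~\eqref{eq:6} term by term, with $U_{\bh}^{p_t}=\widetilde U_{\bh}^{p_t}$ and the same $W_{\bh}^{p_t}$ (the stiffness and load terms already carry $\Pi W_{\bh}^{p_t}$ and match verbatim once $\widetilde U_{\bh}^{p_t}=U_{\bh}^{p_t}$).

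For the forward implication, this suffices: given a solution $(\widetilde U_{\bh}^{p_t},\widetilde V_{\bh}^{p_t})$ of~\eqref{eq:10}, for each $W_{\bh}^{p_t}\in Q^{p_t}_{\bh,\bullet,0}(Q_T)$ the function $\Pi W_{\bh}^{p_t}$ is an admissible test function $\lambda_{\bh}^{p_t-1}$, so the computation above produces~\eqref{eq:6} for $U_{\bh}^{p_t}=\widetilde U_{\bh}^{p_t}$; the initial condition transfers trivially, and no surjectivity property of $\Pi$ is needed here. For the converse, given a solution $U_{\bh}^{p_t}$ of~\eqref{eq:6}, I first invoke Lemma~\ref{lem:24} with $V_{\bh}^{p_t-1}=\partial_t U_{\bh}^{p_t}$: it yields a unique $\widetilde V_{\bh}^{p_t}\in Q_{\bh}^{p_t}(Q_T)$ with $\widetilde V_{\bh}^{p_t}(\cdot,0)=V_{0,h_{\bx}}$ and $\Pi\widetilde V_{\bh}^{p_t}=\partial_t U_{\bh}^{p_t}$ — which is exactly the $\widetilde V_{\bh}^{p_t}$ in the statement, in particular well defined. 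With $\widetilde U_{\bh}^{p_t}=U_{\bh}^{p_t}$, the first equation of~\eqref{eq:10} and both initial conditions hold by construction, and the displayed computation, read in reverse, gives the second equation of~\eqref{eq:10} for all test functions of the form $\lambda_{\bh}^{p_t-1}=\Pi W_{\bh}^{p_t}$ with $W_{\bh}^{p_t}\in Q^{p_t}_{\bh,\bullet,0}(Q_T)$.

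The remaining point — which I expect to be the main obstacle — is to extend the second equation of~\eqref{eq:10} from test functions of the form $\Pi W_{\bh}^{p_t}$ to arbitrary $\lambda_{\bh}^{p_t-1}\in\partial_t Q_{\bh}^{p_t}(Q_T)$, that is, to show that $W_{\bh}^{p_t}\mapsto\Pi W_{\bh}^{p_t}$ maps $Q^{p_t}_{\bh,\bullet,0}(Q_T)$ onto $\partial_t Q_{\bh}^{p_t}(Q_T)$. Since the two spaces have the same dimension — the time factors $S_{h_t,\bullet,0}^{p_t}(0,T)$ and $\partial_t S_{h_t}^{p_t}(0,T)$ both have dimension $N_t p_t$ — it is enough to prove injectivity: if $W_{\bh}^{p_t}\in Q^{p_t}_{\bh,\bullet,0}(Q_T)$ satisfies $\Pi W_{\bh}^{p_t}=0$, then $W_{\bh}^{p_t}=0$. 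This follows as in the proof of Lemma~\ref{lem:24}, with the roles of the endpoints $0$ and $T$ interchanged: $\Pi W_{\bh}^{p_t}=0$ forces $W_{\bh}^{p_t}$ to be proportional, on each time subinterval, to the Legendre polynomial of degree $p_t$ there; continuity at the interior nodes propagates this relation across the mesh, and $W_{\bh}^{p_t}(\cdot,T)=0$ then forces the proportionality constant on the last subinterval — hence, recursively, on all of them — to vanish. Combining this surjectivity with the forward and backward computations above establishes the stated equivalence.
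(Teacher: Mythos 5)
Your proof is correct and follows essentially the same route as the paper's: the projection identity $\Pi_{h_t}^{(p_t-1,-1)}\widetilde V_{\bh}^{p_t}=\partial_t\widetilde U_{\bh}^{p_t}$ from the first equation, integration by parts in time against $W_{\bh}^{p_t}\in Q^{p_t}_{\bh,\bullet,0}(Q_T)$, Lemma~\ref{lem:24} for the reconstruction of $\widetilde V_{\bh}^{p_t}$, and the bijectivity of $W_{\bh}^{p_t}\mapsto\Pi_{h_t}^{(p_t-1,-1)}W_{\bh}^{p_t}$ onto $\partial_t Q_{\bh}^{p_t}(Q_T)$ in the converse. The only difference is that you spell out this last bijectivity (dimension count plus the Legendre-polynomial injectivity argument with the endpoints interchanged), which the paper dispatches with ``similarly as above''; your argument for it is sound.
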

\begin{proof}
Exploiting the equivalence in Lemma~\ref{lem:23}, we study directly the relation between~\eqref{eq:10} and~\eqref{eq:11}. We start by proving that, if $\widetilde{U}_{\bh}^{p_t}, \widetilde{V}_{\bh}^{p_t} \in Q_{\bh}^{p_t}(Q_T)$ solves~\eqref{eq:10}, then 
\begin{equation} \label{eq:12}
    U_{\bh}^{p_t} = \widetilde{U}_{\bh}^{p_t} \in Q_{\bh}^{p_t}(Q_T), \quad V_{\bh}^{p_t-1} = \Pi_{h_t}^{(p_t-1,-1)} \widetilde{V}_{\bh}^{p_t} \in \partial_t Q_{\bh}^{p_t}(Q_T)
\end{equation}
solves~\eqref{eq:11}. We begin by verifying the validity of the first equation in \eqref{eq:11}. For all~$\chi_{\bh}^{p_t-1} \in \partial_t Q_{\bh}^{p_t}(Q_T)$, we have
\begin{equation*}
\begin{aligned}
	(\partial_t U_{\bh}^{p_t}, \chi_{\bh}^{p_t-1})_{L^2(Q_T)} \stackrel{\eqref{eq:12}}{=} (\partial_t \widetilde{U}_{\bh}^{p_t},  \chi_{\bh}^{p_t-1})_{L^2(Q_T)} & \stackrel{\eqref{eq:10}}{=} (\widetilde{V}_{\bh}^{p_t}, \chi_{\bh}^{p_t-1})_{L^2(Q_T)} 
    \\ & \stackrel{\eqref{eq:5}}{=} (\Pi_{h_t}^{(p_t-1,-1)} \widetilde{V}^{p_t}_{\bh}, \chi_{\bh}^{p_t-1})_{L^2(Q_T)} 
     \\ & \stackrel{\eqref{eq:12}}{=} (V_{\bh}^{p_t-1}, \chi_{\bh}^{p_t-1})_{L^2(Q_T)}.
\end{aligned}
\end{equation*}
For the second equation of \eqref{eq:11}, for all $W_{\bh}^{p_t} \in Q_{\bh,\bullet,0}^{p_t}(Q_T)$, we first compute
\begin{equation} \label{eq:13}
\begin{aligned}
     -(V_{\bh}^{p_t-1},  \partial_t W^{p_t}_{\bh} )_{L^2(Q_T)} 
     \stackrel{\eqref{eq:12}}{=} -(\Pi_{h_t}^{(p_t-1,-1)} \widetilde{V}_{\bh}^{p_t}, \partial_t W_{\bh}^{p_t})_{L^2(Q_T)} 
     \stackrel{\eqref{eq:5}}{=} -( \widetilde{V}_{\bh}^{p_t}, \partial_t W_{\bh}^{p_t})_{L^2(Q_T)}.
\end{aligned}
\end{equation}
Then, integrating by parts in time the right-hand side of~\eqref{eq:13}, and using the initial condition~$ \widetilde{V}_{\bh}^{p_t}(\cdot,0) = V_{0,h_{\bx}}$, as well as the zero final condition on the test functions~$W_{\bh}^{p_t}$, we get
 \begin{equation} \label{eq:14}
\begin{aligned}
-(V_{\bh}^{p_t-1},  \partial_t W^{p_t}_{\bh})_{L^2(Q_T)} 
     & = (\partial_t \widetilde{V}_{\bh}^{p_t}, W_{\bh}^{p_t})_{L^2(Q_T)} + (V_{0,h_{\bx}},W_{\bh}^{p_t}(\cdot,0))_{L^2(\Omega)}
    \\ & \stackrel{\eqref{eq:5}}{=} (\partial_t \widetilde{V}_{\bh}^{p_t}, \Pi_{h_t}^{(p_t-1,-1)} W_{\bh}^{p_t})_{L^2(Q_T)} + (V_{0,h_{\bx}},W_{\bh}^{p_t}(\cdot,0))_{L^2(\Omega)}
    \\ & \stackrel{\eqref{eq:10}}{=} -(c^2 \nabla_{\bx} \widetilde{U}_{\bh}^{p_t}, \nabla_{\bx} \Pi_{h_t}^{(p_t-1,-1)} W_{\bh}^{p_t})_{L^2(Q_T)} 
    \\ & \qquad + (F,\Pi_{h_t}^{(p_t-1,-1)} W_{\bh}^{p_t})_{L^2(Q_T)}+ (V_{0,h_{\bx}},W_{\bh}^{p_t}(\cdot,0))_{L^2(\Omega)}
    \\ & \stackrel{\eqref{eq:12}}{=} -(c^2 \nabla_{\bx} U_{\bh}^{p_t}, \nabla_{\bx} \Pi_{h_t}^{(p_t-1,-1)} W_{\bh}^{p_t})_{L^2(Q_T)} 
    \\ & \qquad + (F,\Pi_{h_t}^{(p_t-1,-1)} W_{\bh}^{p_t})_{L^2(Q_T)}+ (V_{0,h_{\bx}},W_{\bh}^{p_t}(\cdot,0))_{L^2(\Omega)},
\end{aligned}
\end{equation}
and the proof of the first claim is complete. 

\noindent
Conversely, if~$U_{\bh}^{p_t} \in Q_{\bh}^{p_t}(Q_T)$, $V_{\bh}^{p_t-1} \in \partial_t Q_{\bh}^{p_t}(Q_T)$ solves~\eqref{eq:11}, then
\begin{equation} \label{eq:15}
    \widetilde{U}_{\bh}^{p_t}=U_{\bh}^{p_t}\in Q_{\bh}^{p_t}(Q_T), \quad \widetilde{V}_{\bh}^{p_t} \text{~such that} \quad \Pi_{h_t}^{(p_t-1,-1)} \widetilde{V}_{\bh}^{p_t} = V_{\bh}^{p_t-1}, \quad  \widetilde{V}_{\bh}^{p_t}(\cdot,0) = V_{0,h_{\bx}}
\end{equation}
solves~\eqref{eq:10}. Existence and uniqueness of~$\widetilde{V}_{\bh}^{p_t}$ as in~\eqref{eq:15} follow from Lemma \ref{lem:24}. With the existence of~$\widetilde{V}_{\bh}^{p_t}$, the first equation in \eqref{eq:10} is readily verified: for all $\chi_{\bh}^{p_t-1} \in \partial_t Q_{\bh}^{p_t}(Q_T)$,
\begin{equation*}
\begin{aligned}
	(\partial_t \widetilde{U}_{\bh}^{p_t}, \chi_{\bh}^{p_t-1})_{L^2(Q_T)} \stackrel{\eqref{eq:15}}{=} (\partial_t U_{\bh}^{p_t}, \chi_{\bh}^{p_t-1})_{L^2(Q_T)} & \stackrel{\eqref{eq:11}}{=} (V_{\bh}^{p_t-1}, \chi_{\bh}^{p_t-1})_{L^2(Q_T)}
    \\ & \stackrel{\eqref{eq:15}}{=} (\Pi_{h_t}^{(p_t-1,-1)} \widetilde{V}^{p_t}_{\bh}, \chi_{\bh}^{p_t-1})_{L^2(Q_T)} 
     \\ & \stackrel{\eqref{eq:5}}{=} (\widetilde{V}_{\bh}^{p_t}, \chi_{\bh}^{p_t-1})_{L^2(Q_T)}.
\end{aligned}
\end{equation*}
For the second equation of \eqref{eq:10}, let $\lambda_{\bh}^{p_t-1} \in \partial_t Q_{\bh}^{p_t}(Q_T)$. Similarly as above, there exists a (unique) $W_{\bh}^{p_t} \in Q_{\bh,\bullet,0}^{p_t}(Q_T)$ such that $\Pi_{h_t}^{(p_t-1,-1)} W_{\bh}^{p_t} = \lambda_{\bh}^{p_t-1}$. Then, integrating by parts in time and using initial and final conditions as in~\eqref{eq:14}, we compute
\begin{equation*}
\begin{aligned}
    (\partial_t \widetilde{V}_{\bh}^{p_t}, \lambda^{p_t-1}_{\bh})_{L^2(Q_T)}
     & = (\partial_t \widetilde{V}_{\bh}^{p_t}, \Pi_{h_t}^{(p_t-1,-1)} W^{p_t}_{\bh})_{L^2(Q_T)}
    \\ & \stackrel{\eqref{eq:5}}{=} (\partial_t \widetilde{V}_{\bh}^{p_t},  W^{p_t}_{\bh})_{L^2(Q_T)}
    \\ & = -(\widetilde{V}_{\bh}^{p_t}, \partial_t W_{\bh}^{p_t})_{L^2(Q_T)} - (V_{0,h_{\bx}},W_{\bh}^{p_t}(\cdot,0))_{L^2(\Omega)}
    \\ & \stackrel{\eqref{eq:5}}{=} -(\Pi_{h_t}^{(p_t-1,-1)} \widetilde{V}_{\bh}^{p_t}, \partial_t W_{\bh}^{p_t})_{L^2(Q_T)} - (V_{0,h_{\bx}},W_{\bh}^{p_t}(\cdot,0))_{L^2(\Omega)}
    \\ & \stackrel{\eqref{eq:16}}{=} -(V_{\bh}^{p_t-1}, \partial_t W_{\bh}^{p_t})_{L^2(Q_T)} - (V_{0,h_{\bx}},W_{\bh}^{p_t}(\cdot,0))_{L^2(\Omega)}
    \\ & \stackrel{\eqref{eq:11}}{=} -(c^2 \nabla_{\bx} U_{\bh}^{p_t} , \nabla_{\bx} \Pi_{h_t}^{(p_t-1,-1)} W_{\bh}^{p_t})_{L^2(Q_T)} 
    \\ & \qquad \hspace{0.1cm}  + (F,\Pi_{h_t}^{(p_t-1,-1)} W_{\bh}^{p_t})_{L^2(Q_T)}
    \\ & = -(c^2 \nabla_{\bx} U_{\bh}^{p_t}, \nabla_{\bx} \lambda_{\bh}^{p_t-1})_{L^2(Q_T)}  + (F,\lambda_{\bh}^{p_t-1})_{L^2(Q_T)}
    \\ & \stackrel{\eqref{eq:15}}{=} -(c^2 \nabla_{\bx} \widetilde{U}_{\bh}^{p_t}, \nabla_{\bx} \lambda_{\bh}^{p_t-1})_{L^2(Q_T)}  + (F,\lambda_{\bh}^{p_t-1})_{L^2(Q_T)},    
\end{aligned}
\end{equation*}
which completes the proof.
\end{proof}
\begin{remark}
The equivalence established in Theorem \ref{thm:25} is similar 
to the result in~\cite{Cockbrun2025}, which shows that the discretizations of the derivative by the continuous Galerkin method (see \cite{Hulme72a}) and by the discontinuous Galerkin method (see, e.g., \cite{SchotzauSchwab2000}) are the same up to a reconstruction. In fact, the second equation in \eqref{eq:11} can also be recast as a particular DG-in-time method (see the second equation in \eqref{eq:35} below). However, the flux definition used here differs from that adopted in \cite{Cockbrun2025}.
\end{remark}
\begin{remark}
We emphasize that the second-order-in-time DG--CG formulation proposed and analyzed in~\cite{Walkington2014, DongMascottoWang2024, DongGeorgoulisMascottoWang2025} is not directly related to the method consider in this paper and to the first-order-in-time schemes of \cite{BalesLasiecka1994,FrenchPeterson1996}. That method is obtained from \eqref{eq:2} without integration by parts in time.
\end{remark}

\subsection{Theoretical properties} \label{sec:25}

Owing to the equivalence result implied by Theorem \ref{thm:25}, the main theoretical results established for the scheme in~\eqref{eq:10} immediately carry over to~\eqref{eq:6}. We first summarize these results for~\eqref{eq:10}, for which we refer to~\cite{FrenchPeterson1996} and~\cite{Gomez2025}, and then state the corresponding results for~\eqref{eq:6} in Theorem~\ref{th:28}. 

\medskip\noindent
{\bf \emph{i)} Well-posedness} (\cite[Theorem~1]{FrenchPeterson1996}, \cite[Theorem~3.5]{Gomez2025}){\bf .} There exists a unique  solution $(\widetilde{U}_{\bh}^{p_t},\widetilde{V}_{\bh}^{p_t})$ of~\eqref{eq:10} and it satisfies 
\begin{equation*}
    \| \widetilde{V}_{\bh}^{p_t}\|_{C([0,T];L^2(\Omega))} + \| c \nabla_{\bx} \widetilde{U}_{\bh}^{p_t}\|_{C([0,T];L^2(\Omega)^d)} \lesssim \| V_0 \|_{L^2(\Omega)} + \| c \nabla_{\bx} U_0 \|_{L^2(\Omega)^d} + \| F \|_{L^1(0,T;L^2(\Omega)},
\end{equation*}
where the hidden constant only depends on $p_t$.

\smallskip
\noindent
For the error estimates, we adopt the setting of~\cite{Gomez2025} and formulate the following additional assumption.
\begin{assumption}\label{ass:27}
Let $\Omega$ satisfy the elliptic regularity assumption, i.e., if $\varphi \in H_0^1(\Omega)$ and $\Delta_{\bx} \varphi \in L^2(\Omega)$, then $\varphi \in H^2(\Omega)$. Moreover, let the data satisfy $c \in C(\overline{\Omega})\cap W^1_\infty(\Omega)$ and, for some $r>\max\{1,d/2\}$, $U_0\in H^r(\Omega)$ and~$V_0\in H^r(\Omega)\cap H^1_0(\Omega)$, and let the solution of the weak formulation~\eqref{eq:2} satisfy $U \in H^2(0,T;H^r(\Omega))$ with $\nabla_{\bx} \cdot (c^2 \nabla_{\bx} U) \in H^1(0,T;L^2(\Omega))$. 
\end{assumption}

\medskip\noindent
{\bf \emph{ii)} Error estimates} (\cite[Theorem~7]{FrenchPeterson1996}, \cite[Theorem~4.9]{Gomez2025}){\bf .} Let  Assumption~\ref{ass:27} hold. Then, provided that $V_0 \in H^{\ell+1}(\Omega)$ and
\begin{equation} \label{eq:18}
    U \in W_1^2(0,T;H^{\ell+1}(\Omega)) \cap W_1^{m+2}(0,T;H^1(\Omega)) \cap W_1^{m+1}(0,T;H^2(\Omega)) \cap C^{m+2}([0,T];L^2(\Omega))
\end{equation}
are satisfied for given~$\ell$ and $m$ with $1 \le \ell \le p_{\bx}$ and $1 \le m \le p_t$, 
the following error estimates hold:
\begin{align}\nonumber
   \|\partial_t U -  \widetilde{V}_{\bh}^{p_t}\|_{C([0,T];L^2(\Omega))} 
   \lesssim & \, h_{\bx}^{\ell+1}\Big( \|\partial_t U\|_{C([0,T];H^{\ell+1}(\Omega))} + \|V_0\|_{H^{\ell+1}(\Omega)}+ \|\partial_{tt} U\|_{L^1(0,T;H^{\ell+1}(\Omega))}\Big)
    \\ \nonumber & + h_t^{m+1}\Big( \|\partial_t^{(m+2)} U\|_{C([0,T];L^2(\Omega))} + \|\nabla_{\bx} \cdot (c^2 \nabla_{\bx} \partial_t^{(m+1)} U)\|_{L^1(0,T;L^2(\Omega))}
    \\ \nonumber &\qquad\qquad + \|c \nabla_{\bx} \partial_t^{(m+2)} U\|_{L^1(0,T;L^2(\Omega)^d)} \Big),
   \\ \nonumber \|c \nabla_{\bx} (U-\widetilde{U}_{\bh})\|_{C([0,T];L^2(\Omega))} 
   \lesssim & \, h_{\bx}^\ell \Big(\|U\|_{C([0,T];H^{\ell+1}(\Omega))}  + h_{\bx} \|V_0\|_{H^{\ell+1}(\Omega)} + h_{\bx}\|\partial_{tt}U\|_{L^1(0,T;H^{\ell+1}(\Omega))} \Big)
    \\ \nonumber & + h_t^{m+1}\Big( \|c\nabla_{\bx} \partial_t^{(m+1)} U\|_{C([0,T];L^2(\Omega)^d)} 
    \\ \label{eq:19} & \qquad\qquad + \|\nabla_{\bx} \cdot (c^2 \nabla_{\bx} \partial_t^{(m+1)} U)\|_{L^1(0,T;L^2(\Omega))}
     \\ \nonumber & \qquad\qquad+ \|c \nabla_{\bx} \partial_t^{(m+2)} U\|_{L^1(0,T;L^2(\Omega)^d)} \Big),
     \\ \nonumber \|U-\widetilde{U}_{\bh}\|_{C([0,T];L^2(\Omega))} 
    \lesssim & \, h_{\bx}^{\ell+1}\Big( \|U\|_{C([0,T];H^{\ell+1}(\Omega))} + \|V_0\|_{H^{\ell+1}(\Omega)} + \|\partial_{tt}U\|_{L^1(0,T;H^{\ell+1}(\Omega))}\Big) 
    \\ \nonumber & + h_{\bx}^{m+1}\Big( \|\partial_t^{(m+2)} U\|_{C([0,T];L^2(\Omega))} + \|\nabla_{\bx} \cdot (c^2 \nabla_{\bx} \partial_t^{(m+1)} U)\|_{L^1(0,T;L^2(\Omega))}
    \\ \nonumber &\qquad\qquad + \|c \nabla_{\bx} \partial_t^{(m+2)} U\|_{L^1(0,T;L^2(\Omega)^d)} \Big),
\end{align}
with hidden constants independent of~$h_{\bx}$ and~$h_{t}$.

\medskip\noindent
{\bf \emph{iii)} Energy conservation.} Let $\widetilde{U}_{\bh}^{p_t}, \widetilde{V}_{\bh}^{p_t}$ be a solution of \eqref{eq:10}. Then, for~$F=0$, the following energy conservation property is satisfied: for all~$j=0,\ldots, N_t$, 
\begin{equation*} 
\begin{aligned}
   \frac{1}{2} \left( 
   \| \widetilde{V}_{\bh}^{p_t}(\cdot,t_j)\|^2_{L^2(\Omega)} + \| c\nabla_{\bx} \widetilde{U}_{\bh}^{p_t}(\cdot,t_j)\|^2_{L^2(\Omega)^d} 
   \right) = \frac{1}{2} \left( \| V_{0,h_{\bx}}\|^2_{L^2(\Omega)} + \| c\nabla_{\bx} U_{0,h_{\bx}}\|^2_{L^2(\Omega)^d}
   \right).
\end{aligned}
\end{equation*}
This identity can be derived by testing \eqref{eq:10}, for $j=0,\ldots,N_t$, with 
\begin{equation*}
    \chi_{\bh}^{p_t-1}(\bx,t) = -\partial_t \widetilde{V}_{\bh}^{p_t}(\bx,t) \mathds{1}_{[0,t_j]}(t) \quad \text{and} \quad \lambda_{\bh}^{p_t-1}(\bx,t) = \partial_t \widetilde{U}_{\bh}^{p_t}(\bx,t) \mathds{1}_{[0,t_j]}(t),
\end{equation*}
and adding the resulting two equations. This gives
\begin{equation*}
    (\widetilde{V}_{\bh}^{p_t}, \partial_t \widetilde{V}_{\bh}^{p_t})_{L^2(\Omega \times [0,t_j])} + (c^2 \nabla_{\bx} \widetilde{U}_{\bh}^{p_t}, \partial_t \widetilde{U}_{\bh}^{p_t})_{L^2(\Omega \times [0,t_j])} =0,
\end{equation*}
from which~\eqref{eq:28} readily follows.

\medskip
\noindent
We deduce now the corresponding properties for~\eqref{eq:6}.
\begin{theorem}\label{th:28}
There exists a unique solution $U_{\bh}^{p_t}$ of~\eqref{eq:6} and it satisfies 
\begin{equation*}
    T^{-1/2}\| \partial_t U_{\bh}^{p_t}\|_{L^2(Q_T)}
    + \| c \nabla_{\bx} U_{\bh}^{p_t}\|_{C([0,T];L^2(\Omega)^d)} \lesssim \| V_0 \|_{L^2(\Omega)} + \| c \nabla_{\bx} U_0 \|_{L^2(\Omega)^d} + \| F \|_{L^1(0,T;L^2(\Omega))},
\end{equation*}
where the hidden constant only depends on $p_t$. Moreover, under Assumption~\ref{ass:27}, if $V_0 \in H^{\ell+1}(\Omega)$ and $U$ has the regularity in \eqref{eq:18} for given~$\ell$ and $m$ with $1 \le \ell \le p_{\bx}$ and $1 \le m \le p_t$, the following error estimates hold:
\begin{equation} \label{eq:20}
\begin{aligned}
   \|c \nabla_{\bx} (U-U_{\bh}^{p_t})\|_{C([0,T];L^2(\Omega))} \lesssim & \, h_{\bx}^\ell  + h_t^{m+1},
   \\ \|U-U_{\bh}^{p_t}\|_{C([0,T];L^2(\Omega))} \lesssim & \, h_{\bx}^{\ell+1}+ h_t^{m+1}.
\end{aligned}
\end{equation}
Here, we reported only the rates, omitting the dependence on~$U$ and~$V_0$, which are exactly as in the last two estimates in~\eqref{eq:19}. Furthermore, we have 
\begin{equation} \label{eq:21}
\begin{aligned}
    \| \partial_t U- \partial_t U_{\bh}^{p_t}\|_{L^2(Q_T))} 
    & \lesssim  h_{\bx}^{\ell+1}\Big( \|\partial_t U\|_{C([0,T];H^{\ell+1}(\Omega))} + \|V_0\|_{H^{\ell+1}(\Omega)}+ \|\partial_{tt} U\|_{L^1(0,T;H^{\ell+1}(\Omega))}\Big)
    \\ & \,\, + h_t^{\min\{m+1,p_t\}}\|\partial_t^{(m+2)} U\|_{C([0,T];L^2(\Omega))} 
    \\ & \,\, + h_t^{m+1}\Big( \|\nabla_{\bx} \cdot (c^2 \nabla_{\bx} \partial_t^{(m+1)} U)\|_{L^1(0,T;L^2(\Omega))} + \|c \nabla_{\bx} \partial_t^{(m+2)} U\|_{L^1(0,T;L^2(\Omega)^d)} \Big)
    \\ & \lesssim h_{\bx}^{\ell+1}  + h_t^{\min\{m+1,p_t\}}.
\end{aligned}
\end{equation}
Finally, for~$F=0$, the following energy conservation property is satisfied: for all~$j=0,\ldots, N_t$, 
\begin{equation} \label{eq:21b}
\begin{aligned}
   \frac{1}{2} \left( 
   \| \widetilde{V}_{\bh}^{p_t}(\cdot,t_j)\|^2_{L^2(\Omega)} + \| c\nabla_{\bx} U_{\bh}^{p_t}(\cdot,t_j)\|^2_{L^2(\Omega)^d} 
   \right) = \frac{1}{2} \left( \| V_{0,h_{\bx}}\|^2_{L^2(\Omega)} + \| c\nabla_{\bx} U_{0,h_{\bx}}\|^2_{L^2(\Omega)^d}
   \right).
\end{aligned}
\end{equation}
where $\widetilde{V}_{\bh}^{p_t}$ is the unique function in~$Q_{\bh}^{p_t}(Q_T)$ such that
\begin{equation} \label{eq:22}
    \Pi_{h_t}^{(p_t-1,-1)} \widetilde{V}_{\bh}^{p_t} = \partial_t U_{\bh}^{p_t} \quad \text{and}\quad  \widetilde{V}_{\bh}^{p_t}(\cdot,0) = V_{0,h_{\bx}},
\end{equation}
\end{theorem}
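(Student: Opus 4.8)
The plan is to transfer every claimed property from scheme~\eqref{eq:10} to scheme~\eqref{eq:6} through the equivalence established in Theorem~\ref{thm:25}, handling each of the four assertions (well-posedness with a priori bound, $C([0,T];L^2)$ error estimates, the $L^2(Q_T)$-in-time-derivative estimate, and nodal energy conservation) in turn. For \emph{well-posedness}, I would invoke Theorem~\ref{thm:25}: a solution $U_{\bh}^{p_t}$ of~\eqref{eq:6} corresponds to a solution $(\widetilde U_{\bh}^{p_t},\widetilde V_{\bh}^{p_t})$ of~\eqref{eq:10} with $\widetilde U_{\bh}^{p_t}=U_{\bh}^{p_t}$, and conversely; since \eqref{eq:10} has a unique solution (property~\emph{i)}), so does \eqref{eq:6}. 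The a priori bound then follows from the bound on~\eqref{eq:10}, except that the term $\|\widetilde V_{\bh}^{p_t}\|_{C([0,T];L^2(\Omega))}$ must be replaced by $T^{-1/2}\|\partial_t U_{\bh}^{p_t}\|_{L^2(Q_T)}$: here I would use that $\partial_t U_{\bh}^{p_t} = \Pi_{h_t}^{(p_t-1,-1)}\widetilde V_{\bh}^{p_t}$ (Theorem~\ref{thm:25}), that $L^2$-projections are non-expansive, and Cauchy--Schwarz in time, to get $\|\partial_t U_{\bh}^{p_t}\|_{L^2(Q_T)} = \|\Pi_{h_t}^{(p_t-1,-1)}\widetilde V_{\bh}^{p_t}\|_{L^2(Q_T)} \le \|\widetilde V_{\bh}^{p_t}\|_{L^2(Q_T)} \le T^{1/2}\|\widetilde V_{\bh}^{p_t}\|_{C([0,T];L^2(\Omega))}$.

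For the \emph{error estimates}~\eqref{eq:20} on $\|c\nabla_{\bx}(U-U_{\bh}^{p_t})\|_{C([0,T];L^2)}$ and $\|U-U_{\bh}^{p_t}\|_{C([0,T];L^2)}$, since $\widetilde U_{\bh}^{p_t}=U_{\bh}^{p_t}$ identically, these are literally the second and third estimates of~\eqref{eq:19}, so nothing is needed beyond citing property~\emph{ii)}. The estimate~\eqref{eq:21} on $\|\partial_t U - \partial_t U_{\bh}^{p_t}\|_{L^2(Q_T)}$ is the genuinely new piece: writing $\partial_t U - \partial_t U_{\bh}^{p_t} = \partial_t U - \Pi_{h_t}^{(p_t-1,-1)}\widetilde V_{\bh}^{p_t}$ and inserting $\pm\,\partial_t U$ projected, I would split into $(\,\partial_t U - \Pi_{h_t}^{(p_t-1,-1)}\partial_t U\,) + \Pi_{h_t}^{(p_t-1,-1)}(\,\partial_t U - \widetilde V_{\bh}^{p_t}\,)$; wait — more carefully, since $\partial_t U = \partial_t U$ and the second term should be $\Pi_{h_t}^{(p_t-1,-1)}(\partial_t U - \widetilde V_{\bh}^{p_t})$ only after projecting the first occurrence. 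The first term is a projection-approximation error in time of $V=\partial_t U$, estimated by a standard best-approximation bound $\|v - \Pi v\|_{L^2} \lesssim h_t^{\min\{m+1,p_t\}}\|\partial_t^{(m+2)}U\|$ for the space of one-degree-lower polynomials (note the cap at $p_t$, since the projection space has degree $p_t-1$); the second term is bounded by $\|\partial_t U - \widetilde V_{\bh}^{p_t}\|_{L^2(Q_T)} \le T^{1/2}\|\partial_t U - \widetilde V_{\bh}^{p_t}\|_{C([0,T];L^2)}$, which is controlled by the first estimate in~\eqref{eq:19}. Combining the two and absorbing $h_t$-higher-order terms gives~\eqref{eq:21}; the main subtlety here is getting the $\min\{m+1,p_t\}$ exponent right and tracking which constants depend on $T$.

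Finally, for \emph{energy conservation}~\eqref{eq:21b}: given the solution $U_{\bh}^{p_t}$ of~\eqref{eq:6}, define $\widetilde V_{\bh}^{p_t}$ by~\eqref{eq:22}, which exists and is unique by Lemma~\ref{lem:24} (applied with $V_{\bh}^{p_t-1}=\partial_t U_{\bh}^{p_t}\in\partial_t Q_{\bh}^{p_t}(Q_T)$). By Theorem~\ref{thm:25} the pair $(U_{\bh}^{p_t},\widetilde V_{\bh}^{p_t})$ solves~\eqref{eq:10}, and since $\widetilde U_{\bh}^{p_t}(\cdot,t_j)=U_{\bh}^{p_t}(\cdot,t_j)$, the nodal energy identity (property~\emph{iii)} for~\eqref{eq:10}, with $F=0$) transfers verbatim; I would simply quote it. I expect the main obstacle to be purely bookkeeping: carefully matching norms across the equivalence — especially producing the $T^{-1/2}\|\partial_t U_{\bh}^{p_t}\|_{L^2(Q_T)}$ surrogate for the $C([0,T];L^2)$ norm of $\widetilde V_{\bh}^{p_t}$ using only the non-expansiveness of $\Pi_{h_t}^{(p_t-1,-1)}$ — and correctly deriving the time-projection best-approximation estimate with the $\min\{m+1,p_t\}$ exponent; there is no deep new idea beyond the equivalence already in hand.
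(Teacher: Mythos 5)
Your proposal is correct and follows essentially the same route as the paper: transfer well-posedness, the estimates~\eqref{eq:20}, and energy conservation directly through the equivalence of Theorem~\ref{thm:25}, and prove~\eqref{eq:21} by the triangle-inequality split $\partial_t U - \partial_t U_{\bh}^{p_t} = (\partial_t U - \Pi_{h_t}^{(p_t-1,-1)}\partial_t U) + \Pi_{h_t}^{(p_t-1,-1)}(\partial_t U - \widetilde V_{\bh}^{p_t})$ combined with the stability and approximation properties of the $L^2$ projection and the first estimate in~\eqref{eq:19}. Your extra remark on deriving the $T^{-1/2}\|\partial_t U_{\bh}^{p_t}\|_{L^2(Q_T)}$ bound from non-expansiveness of the projection is a detail the paper leaves implicit, but it is exactly the intended argument.
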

\begin{proof}
From Theorem \ref{thm:25}, we deduce that  $U_{\bh}^{p_t} \in Q_{\bh}^{p_t}(Q_T)$ is a solution of~\eqref{eq:6} if and only if the pair $(\widetilde{U}_{\bh}^{p_t},\widetilde{V}_{\bh}^{p_t})\in Q_{\bh}^{p_t}(Q_T) \times Q_{\bh}^{p_t}(Q_T)$, with $\widetilde{U}_{\bh}^{p_t} = U_{\bh}^{p_t}$ and $\widetilde{V}_{\bh}^{p_t}$ satisfying \eqref{eq:22}, is a solution of~\eqref{eq:10}. Consequently, the well-posedness, error estimates \eqref{eq:20}, and energy conservation properties immediately follow from those of~\eqref{eq:10}. It remains to prove~\eqref{eq:21}. We compute, using \eqref{eq:22} and the stability of the $L^2$ projection, 
\begin{equation*}
\begin{aligned}
        \| \partial_t U - \partial_t U_{\bh}^{p_t} \|_{L^2(Q_T)} & \le  \| \partial_t U - \Pi_{h_t}^{(p_t-1,-1)} \partial_t U \|_{L^2(Q_T)} + \| \Pi_{h_t}^{(p_t-1,-1)} \partial_t U - \partial_t U_{\bh}^{p_t} \|_{L^2(Q_T)}
        \\ & =  \| \partial_t U - \Pi_{h_t}^{(p_t-1,-1)} \partial_t U \|_{L^2(Q_T)} + \| \Pi_{h_t}^{(p_t-1,-1)}( \partial_t U - \widetilde{V}_{\bh}^{p_t}) \|_{L^2(Q_T)}
        \\ & \le  \| \partial_t U - \Pi_{h_t}^{(p_t-1,-1)} \partial_t U \|_{L^2(Q_T)} + \| \partial_t U -  \widetilde{V}_{\bh}^{p_t} \|_{L^2(Q_T)}
        \\ & \le  \| \partial_t U - \Pi_{h_t}^{(p_t-1,-1)} \partial_t U \|_{L^2(Q_T)} + T^{1/2} \| \partial_t U -  \widetilde{V}_{\bh}^{p_t} \|_{C([0,T];L^2(\Omega))}.
\end{aligned}
\end{equation*}
Then, we conclude with the approximation property of the $L^2$ projection and the first estimate in \eqref{eq:19}.
\end{proof}
\begin{remark}[Reconstruction $\widetilde{V}_{\bh}^{p_t}$]
For the reconstruction~$\widetilde{V}_{\bh}^{p_t}$ of~$\partial_t U_{\bh}^{p_t}$ defined in~\eqref{eq:22}, the stability estimate
\begin{equation*}
    \| \widetilde{V}_{\bh}^{p_t}\|_{C([0,T];L^2(\Omega))} + \| c \nabla_{\bx} U_{\bh}^{p_t}\|_{C([0,T];L^2(\Omega)^d)} \lesssim \| V_0 \|_{L^2(\Omega)} + \| c \nabla_{\bx} U_0 \|_{L^2(\Omega)^d} + \| F \|_{L^1(0,T;L^2(\Omega))}
\end{equation*}
and the error estimate
\begin{equation*}
    \|\partial_t U - \widetilde{V}_{\bh}^{p_t}\|_{C([0,T];L^2(\Omega))} \lesssim h_{\bx}^{\ell+1} + h_t^{m+1}
\end{equation*}
immediately follow from those for formulation \eqref{eq:10}.
\end{remark}
\begin{remark} [Reconstruction $\USG$]
Following \cite[\S 4.4]{Gomez2025}, a postprocessed approximation $\USG \in Q_{\bh}^{p_t+1}(Q_T)$ of $U$ from the numerical solution $U_{\bh}^{p_t}$ of \eqref{eq:6} can be obtained with the formula
\begin{equation*}
    \USG(\bx,t) := U_{0,h_{\bx}}(\bx) + \int_0^t \widetilde{V}_{\bh}^{p_t}(\bx,s) \dd s, \quad (\bx,t) \in Q_T,
\end{equation*}
with $\widetilde{V}^{p_t}_{\bh}$ as in \eqref{eq:22}. For $p_t > 1$, and provided that $U$ is sufficiently smooth, it was shown in \cite[Theorem~4.12]{Gomez2025} that
\begin{equation*}
    \| U - \USG\|_{C([0,T];L^2(\Omega))} \apprle h_{\bx}^{p_{\bx}+1} + h_t^{p_t+2}.
\end{equation*}
\end{remark}

\subsubsection{Numerical results} \label{sec:251}

\noindent
For a detailed campaign of numerical experiments validating the stability results and convergence estimates stated in Section \ref{sec:25} for formulation \eqref{eq:10}, we refer the reader to \cite{Gomez2025}. Since \eqref{eq:6} is equivalent to \eqref{eq:10}, the same stability properties and convergence behavior are confirmed there as well. We also refer to~\cite{Zank2021} and~\cite[\S 7]{Zank2025} for numerical results derived directly with the variant of the second-order formulation~\eqref{eq:6} introduced in~\cite{Zank2021} (see Remark~\ref{rem:21}). In this section, we focus on verifying the energy conservation property given in \eqref{eq:21b}.

\noindent
Consider the linear wave equation \eqref{eq:1} on $Q_T = \Omega \times (0,T)$, for $\Omega = (-30,30)$ and $T=10$, with data
\begin{equation*}
    c(x)=1, \quad F(x,t) = 0, \quad U_0(x) = \omega(x+1)S(x+1), \quad V_0(x) = -\omega'(x+1) S(x+1) - \omega(x+1) S'(x+1),
\end{equation*}
where
\begin{equation*}
    \omega(s) := e^{-20(s-0.1)^2}- e^{-20(s+0.1)^2}, \qquad S(s) := \frac{1}{1+e^{-30s}}.
\end{equation*}
The exact solution is
\begin{equation*}
    U(x,t) = \omega(x-t+1) S(x-t+1).
\end{equation*}
We can assume homogeneous Dirichlet boundary conditions, since
\begin{equation*}
    \max_{t \in (0,10), \,\, x \in \{-30,30\}} |U(x,t)| \le 10^{-16}.
\end{equation*}
We test the formulation in~\eqref{eq:10} on a uniform mesh with $N_t = 128$ time elements and $N_x = 384$ spatial elements, and using polynomial degrees $p_t = p_x = 1$ and $p_t = p_x = 2$ . For each time step $t_j$, $j = 0,\ldots,N_t$, we evaluate the discrete energy
\begin{equation*}
    E_{\bh}^{p_t}(t_j) := \frac{1}{2} \left(\| \widetilde{V}_{\bh}^{p_t}(\cdot,t_j) \|^2_{L^2(-30,30)} + \| \nabla_{\bx} U_{\bh}^{p_t}(\cdot,t_j) \|^2_{L^2(-30,30)} \right).
\end{equation*}
We recall that $\widetilde{V}^{p_t}_{\bh}$ is the reconstruction of~$\partial_t U^{p_t}_{\bh}$ defined in \eqref{eq:22}, and $U_{h_x,0}$ and $ V_{h_x,0}$ are defined using the elliptic and $L^2$ projections, respectively. We consider the quantity $|E_{\bh}^{p_t}(t_j) - E_{\bh}^{p_t}(0)|$ for $j=1,\ldots,N_t$. According to \eqref{eq:21b}, this difference is expected to be of the order of machine precision. The numerical results, reported in Figure \ref{fig:1}, are in good agreement with this expectation. 

\begin{figure}[h!]
    \centering
    \begin{subfigure}[b]{0.485\textwidth}
    \centering
         \includegraphics[width=\textwidth]{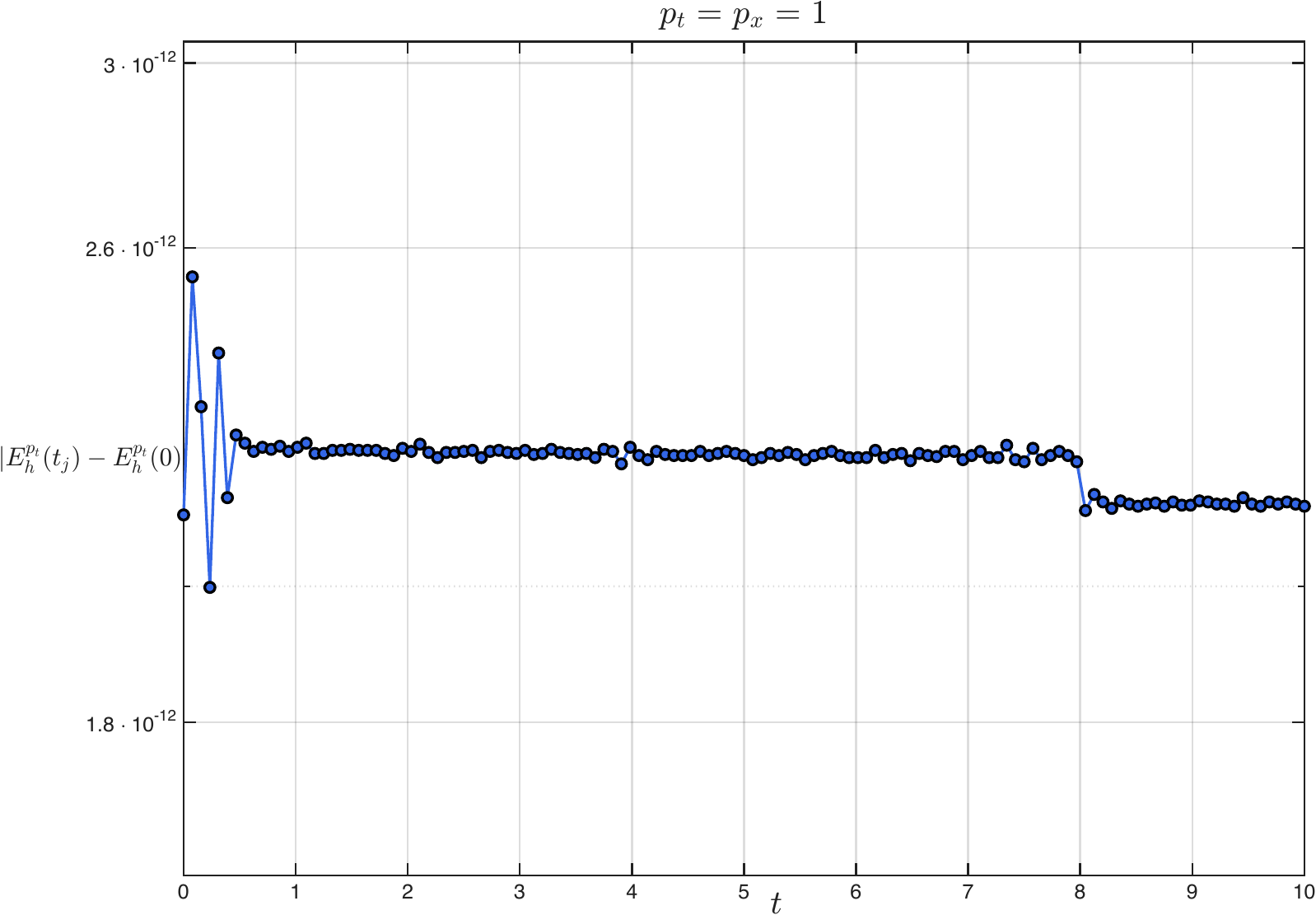}
     \end{subfigure}
    \begin{subfigure}[b]{0.50\textwidth}
    \centering
         \includegraphics[width=\textwidth]{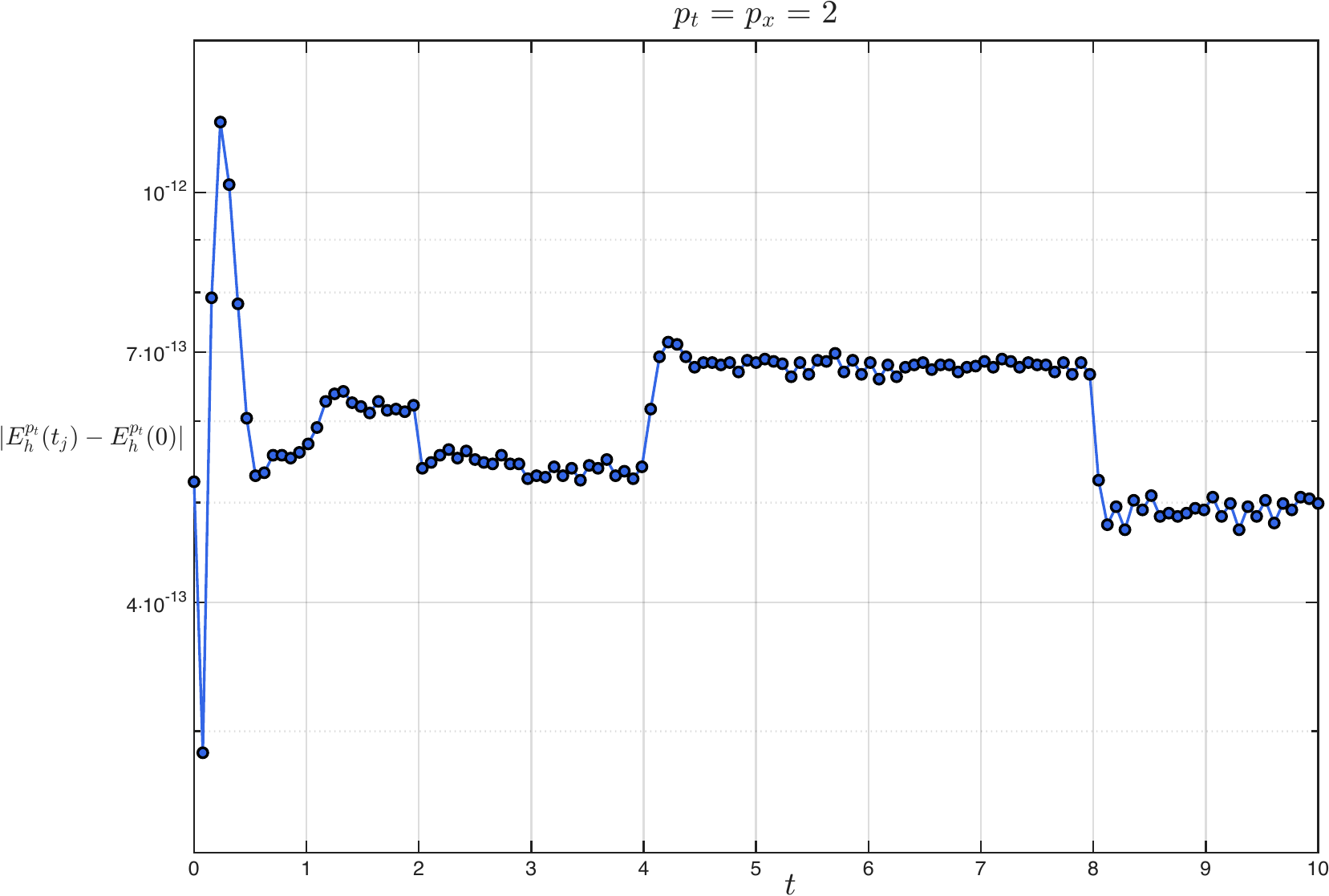}
     \end{subfigure}
\caption{Evolution of the discrete energy error $|E_{\bh}^{p_t}(t_j)-E_{\bh}^{p_t}(0)|$ using polynomial degrees \(p_t=p_x=1\) (left) and \(p_t=p_x=2\) (right).}
\label{fig:1}
\end{figure}

\noindent 
We emphasize that, to observe discrete energy conservation, it is crucial to compute the energy using the reconstruction $\widetilde{V}_{\bh}^{p_t}$ rather than $\partial_t U_{\bh}^{p_t}$. As shown, for example, in~\cite[Remark 4.2.33]{Zank2020}, the conservation of the discrete energy defined using $\partial_t U_{\bh}^{p_t}$ is not achieved to machine precision, as in Figure~\ref{fig:1}, but only up to the same order of accuracy as the numerical method.

\section{Extension to the semilinear case} \label{sec:3}

In this section, we consider \emph{semilinear} wave equations. While the extension of the first-order-in-time DG--CG formulation \eqref{eq:10} of \cite{BalesLasiecka1994,FrenchPeterson1996} to this setting was already studied in \cite{KarakashianMakridakis2005}, here we show that the second-order-in-time method \eqref{eq:6} can be similarly adapted. The equivalence of the resulting second-order- and first-order-in-time formulations carries over by the same arguments as in the linear case. We also introduce two symplectic versions of the second-order-in-time method, one employing the Gauss--Lobatto quadrature rule in time, and the other the Gauss--Legendre one.

\subsection{Semilinear wave equations}
We consider the following initial-boundary-value problem, involving a nonlinear term~$g(U)$ with~$g(0)=0$:
\begin{equation} \label{eq:23}
	\begin{cases}
		\partial_{t}^2 U - \div_{\bx}(c^2 \nabla_{\bx} U) + \NL(U) =  F & \text{in~} Q_T, 
		\\ U = 0 &  \text{on~} \partial \Omega \times (0,T),
		\\ U(\cdot,0) = U_0, \quad \partial_t U(\cdot,t)_{|_{t=0}} = V_0 & \text{in~} \Omega.
	\end{cases}
\end{equation}
The corresponding first-order-in-time problem reads:
\begin{equation} \label{eq:24}
	\begin{cases}
		\partial_t U - V = 0 & \text{in~} Q_T,
        \\ \partial_t V - \div_{\bx}(c^2 \nabla_{\bx} U) + \NL(U) =  F & \text{in~} Q_T,
		\\ U = 0 &  \text{on~} \partial \Omega \times (0,T),
		\\ U(\cdot,0) = U_0, \quad V(\cdot,0) = V_0 & \text{in~} \Omega.
	\end{cases}
\end{equation}
For later convenience, we introduce
\begin{equation}\label{eq:25}
    G(U) := \int_0^U \NL(V) \dd V.
\end{equation}
\begin{remark}[Examples of nonlinearities]
This framework includes several key models:
\begin{itemize}
    \item $\NL(U) = c^2 \sin U$: the \textit{sine-Gordon equation},
    \item $\NL(U) = c^2 U$: the \textit{linear Klein–Gordon equation},
    \item $\NL(U) = c^4 U + c^2 |U|^{\rho} U $,  with $\rho$ in the subcritical regime, i.e., $\rho>0$ if $d=1,2$ and $0 < \rho \le 4$ if $d=3$: the \textit{nonlinear Klein-Gordon equation with defocusing nonlinearity}.
\end{itemize}
\end{remark}
\begin{remark} [Well-posedness results]\label{rem:semilin_wp}
Well-posedness results for~\eqref{eq:23} date back at least to~\cite{Lions1969}. For $\Omega\subset\R^d$ a bounded, Lipschitz domain, in the specific case $\NL(U) = |U|^{\rho}U$, with $\rho > 0$ if $d=2$, or $0 < \rho \leq 2$ if $d=3$, it was established in \cite[Theorems~1.1 and 1.2]{Lions1969} that, for~$c=1$, $F\in L^2(Q_T)$, $U_0\in H^1_0(\Omega)\cap L^{\rho+2}(\Omega)$, and~$V_0\in L^2(\Omega)$, there exists a unique weak solution~$U$ to~\eqref{eq:23} in~$L^\infty(0,T;H^1_0(\Omega)\cap L^{\rho+2}(\Omega))\cap W^{1,\infty}(0,T;L^2(\Omega))$. The result extends to the case of~$\NL$ monotone of polynomial growth, for a polynomial degree sufficiently small (see, e.g.~\cite{LieroStefanelli2013}).

\noindent
For~$\Omega$ a $C^2$-regular domain, assuming $\NL \in W^{1,\infty}(\R)$ with
\begin{align*}
    & z \NL(z) \ge 0 \quad \text{for all~} z\in\R, 
    \\ & G(U_0)\in L^1(\Omega), \quad \text{with~$G$ as in~\eqref{eq:25},}
    \\ &|\NL'(z)| \le C (1+|z|^\gamma) \quad \text{for a.e. } z \in \R,
\end{align*}
with $\gamma > 0$ if $d=1,2$, or $0 \le \gamma \le 2$ if~$d=3$, existence and uniqueness of a strong solution of~\eqref{eq:23} was proved in~\cite[Theorem 3.7]{Barbu1993}, for~$c=1$, $F \in H^1(0,T;L^2(\Omega))$, $U_0 \in H_0^1(\Omega) \cap H^2(\Omega)$, and~$V_0 \in$ $H_0^1(\Omega)$.
This covers the nonlinear Klein-Gordon equation for $\rho > 0$ if~$d=1,2$, or $0 < \rho \leq 2$ if~$d=3$. 
\end{remark}
\noindent 
With~$G$ as in~\eqref{eq:25}, problem~\eqref{eq:24} is a non-autonomous Hamiltonian system with Hamiltonian 
\begin{equation} \label{eq:26}
    \mathcal{H}(V,U,t) := \frac{1}{2}\Big(\int_\Omega |V(\bx,t)|^2 \dd \bx + \int_\Omega |\nabla_{\bx} U(\bx,t)|^2 \dd \bx \Big) + \int_\Omega G\left(U(\bx,t)\right) \dd \bx   + \int_{\Omega}F(\bx, t)\,U(\bx,t)\dd \bx,
\end{equation}
see e.g.~\cite[\S3.2]{MaRa1999}. Since the system is non-autonomous, the Hamiltonian (or energy) is generally not conserved, whereas the symplectic form is conserved; see~\cite[Theorem 2.1]{SanzSernaCalvo1994}. However, if $F$ is independent of $t$, the system is actually autonomous and~$\frac{d}{dt}\mathcal{H}=0$. 

\subsection{Discrete formulations}\label{sec:32}

The method introduced in \cite{BalesLasiecka1994, FrenchPeterson1996} for the linear wave equation (see~\eqref{eq:10}) was extended in~\cite{KarakashianMakridakis2005} to semilinear wave equations. 
This extension reads as follows: 

%%%%%%%%%%%%%%%%%%%%%%%%%%%%%%%%%%%%
\begin{tcolorbox}[
    colframe=black!50!white,
    colback=blue!5!white,
    boxrule=0.5mm,
    sharp corners,
    boxsep=0.5mm,
    top=0.5mm,
    bottom=0.5mm,
    right=1mm,
    left=1mm
]
    \begingroup
    \setlength{\abovedisplayskip}{0pt}
    \setlength{\belowdisplayskip}{0pt}
find $\widetilde{U}_{\bh}^{p_t}, \widetilde{V}_{\bh}^{p_t} \in Q_{\bh}^{p_t}(Q_T)$ such that $\widetilde{U}_{\bh}^{p_t}(\cdot,0) = U_{0,h_{\bx}}$, $ \widetilde{V}_{\bh}^{p_t}(\cdot,0) = V_{0,h_{\bx}}$ in~$\Omega$, and \medskip
\begin{equation} \label{eq:27}
	\begin{cases}
		(\partial_t \widetilde{U}_{\bh}^{p_t}, \chi_{\bh}^{p_t-1})_{L^2(Q_T)} - (\widetilde{V}_{\bh}^{p_t}, \chi_{\bh}^{p_t-1})_{L^2(Q_T)}=0 & \text{for all~} \chi_{\bh}^{p_t-1} \in \partial_t Q_{\bh}^{p_t}(Q_T), \\
		(\partial_t \widetilde{V}_{\bh}^{p_t}, \lambda_{\bh}^{p_t-1})_{L^2(Q_T)} + (c^2 \nabla_{\bx} \widetilde{U}_{\bh}^{p_t}, \nabla_{\bx} \lambda_{\bh}^{p_t-1})_{L^2(Q_T)} &
        \\ \hspace{1.3cm}  + \, (\NL(\widetilde{U}_{\bh}^{p_t}),\lambda_{\bh}^{p_t-1})_{L^2(Q_T)} = (F, \lambda_{\bh}^{p_t-1})_{L^2(Q_T)} & \text{for all~} \lambda_{\bh}^{p_t-1} \in \partial_t Q_{\bh}^{p_t}(Q_T).
	\end{cases}
\end{equation}
    \endgroup
\end{tcolorbox}
%%%%%%%%%%%%%%%%%%%%%%%%%%%%%%%%

\smallskip
\noindent
By proceeding as in Section~\ref{sec:24}, one proves that the method in~\eqref{eq:27} can be equivalently reformulated as the following second-order-in-time discrete formulation:
\begin{tcolorbox}[
    colframe=black!50!white,
    colback=blue!5!white,
    boxrule=0.5mm,
    sharp corners,
    boxsep=0.5mm,
    top=0.5mm,
    bottom=0.5mm,
    right=0.25mm,
    left=0.1mm
]
    \begingroup
    \setlength{\abovedisplayskip}{0pt}
    \setlength{\belowdisplayskip}{0pt}
\, find $U_{\bh}^{p_t} \in Q^{p_t}_{\bh}(Q_T)$ such that $U_{\bh}^{p_t}(\cdot,0) = U_{0,h_{\bx}}$ in~$\Omega$ and\medskip
\begin{equation} \label{eq:28}
\begin{aligned}
	-(& \partial_t U_{\bh}^{p_t},  \partial_t W_{\bh}^{p_t})_{L^2(Q_T)} + (c^2 \nabla_{\bx} U_{\bh}^{p_t} , \nabla_{\bx} \Pi_{h_t}^{(p_t-1,-1)} W_{\bh}^{p_t})_{L^2(Q_T)} 
    \\ & + (\NL(U_{\bh}^{p_t}), \Pi_{h_t}^{(p_t-1,-1)} W_{\bh}^{p_t})_{L^2(Q_T)} 
     = (F,\Pi_{h_t}^{(p_t-1,-1)} W_{\bh}^{p_t})_{L^2(Q_T)} + (V_{0,h_{\bx}},W_{\bh}^{p_t}(\cdot,0))_{L^2(\Omega)}
\end{aligned}
\end{equation}

\medskip\noindent
for all $W_{\bh}^{p_t} \in Q^{p_t}_{\bh,\bullet,0}(Q_T)$. 
    \endgroup
\end{tcolorbox}
\noindent
We emphasize that, in this case, the projection operator~$\Pi^{(p_t-1,-1)}_{h_t}$ in front of the test function is also inserted in the nonlinear term.
\begin{remark}
The coincidence of the $L^2$ projection and the elementwise Lagrange interpolation operator at the Gauss--Legendre nodes for functions in~$S_{h_t}^{p_t}(0,T)$, established in~\eqref{eq:9} and formalized in \eqref{eq:31} below, is particularly useful in the case of formulation~\eqref{eq:28}. This reduces the computation of the $L^2$ projections, including the one within the nonlinear term, to simple interpolation, thereby avoiding the solution of local linear systems, a key advantage previously discussed in Section~\ref{sec:23}.
\end{remark}
\noindent
We state the following equivalence result, whose proof is a straightforward extension of that of Theorem \ref{thm:25}, and thus is not reported.
\begin{proposition} \label{prop:34}
If~$(\widetilde{U}_{\bh}^{p_t},\widetilde{V}_{\bh}^{p_t})\in Q_{\bh}^{p_t}(Q_T)\times Q_{\bh}^{p_t}(Q_T)$ is a solution of~\eqref{eq:27}, then~$U_{\bh}^{p_t} = \widetilde{U}_{\bh}^{p_t}$ is a solution of~\eqref{eq:28}. Conversely, if~$U_{\bh}^{p_t} \in Q_{\bh}^{p_t}(Q_T)$ is a solution of~\eqref{eq:28}, then the pair~$(\widetilde{U}_{\bh}^{p_t},\widetilde{V}_{\bh}^{p_t})\in Q_{\bh}^{p_t}(Q_T)\times Q_{\bh}^{p_t}(Q_T)$, with
\begin{equation*}
    \widetilde{U}_{\bh}^{p_t}=U_{\bh}^{p_t}, \quad \widetilde{V}_{\bh}^{p_t} \quad \text{~such that} \quad \Pi_{h_t}^{(p_t-1,-1)} \widetilde{V}_{\bh}^{p_t} = \partial_t U_{\bh}^{p_t},  \quad \widetilde{V}_{\bh}^{p_t}(\cdot,0) = V_{0,h_{\bx}}
\end{equation*}
is a solution of~\eqref{eq:27}. 
\end{proposition}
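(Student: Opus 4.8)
\textbf{Proof proposal for Proposition~\ref{prop:34}.}
The plan is to mirror the structure of the proof of Theorem~\ref{thm:25} almost verbatim, since the only new ingredient is the nonlinear term $\NL(U_{\bh}^{p_t})$, which enters exactly as the forcing term $F$ does. First I would invoke the analogue of Lemma~\ref{lem:23}, rewriting~\eqref{eq:28} in the system form: find $U_{\bh}^{p_t}\in Q_{\bh}^{p_t}(Q_T)$ and $V_{\bh}^{p_t-1}\in\partial_t Q_{\bh}^{p_t}(Q_T)$ with $U_{\bh}^{p_t}(\cdot,0)=U_{0,h_{\bx}}$ satisfying $(\partial_t U_{\bh}^{p_t},\chi_{\bh}^{p_t-1})_{L^2(Q_T)}=(V_{\bh}^{p_t-1},\chi_{\bh}^{p_t-1})_{L^2(Q_T)}$ for all $\chi_{\bh}^{p_t-1}$, together with the second equation of~\eqref{eq:28} with $\partial_t U_{\bh}^{p_t}$ replaced by $V_{\bh}^{p_t-1}$. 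This is again immediate because the first equation just identifies $V_{\bh}^{p_t-1}=\partial_t U_{\bh}^{p_t}$.

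Next, for the forward direction, given a solution $(\widetilde{U}_{\bh}^{p_t},\widetilde{V}_{\bh}^{p_t})$ of~\eqref{eq:27}, I would set $U_{\bh}^{p_t}=\widetilde{U}_{\bh}^{p_t}$ and $V_{\bh}^{p_t-1}=\Pi_{h_t}^{(p_t-1,-1)}\widetilde{V}_{\bh}^{p_t}$ and check the system form. The first equation follows exactly as in Theorem~\ref{thm:25}, using the first equation of~\eqref{eq:27} and the defining property~\eqref{eq:5} of the projection. For the second equation, I would reproduce the chain of identities in~\eqref{eq:13}--\eqref{eq:14}: starting from $-(V_{\bh}^{p_t-1},\partial_t W_{\bh}^{p_t})_{L^2(Q_T)}$, use~\eqref{eq:5} to remove the projection, integrate by parts in time (picking up the initial term $(V_{0,h_{\bx}},W_{\bh}^{p_t}(\cdot,0))_{L^2(\Omega)}$ and using the zero final condition on $W_{\bh}^{p_t}$), reinsert $\Pi_{h_t}^{(p_t-1,-1)}$ in front of $W_{\bh}^{p_t}$ via~\eqref{eq:5}, and finally apply the second equation of~\eqref{eq:27}. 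The only difference is that the second equation of~\eqref{eq:27} now contributes the extra term $-(\NL(\widetilde{U}_{\bh}^{p_t}),\lambda_{\bh}^{p_t-1})_{L^2(Q_T)}=-(\NL(U_{\bh}^{p_t}),\Pi_{h_t}^{(p_t-1,-1)}W_{\bh}^{p_t})_{L^2(Q_T)}$, which is exactly the nonlinear term appearing in~\eqref{eq:28}.

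For the converse, given a solution $U_{\bh}^{p_t}$ of~\eqref{eq:28} (equivalently of its system form with $V_{\bh}^{p_t-1}=\partial_t U_{\bh}^{p_t}$), I would define $\widetilde{U}_{\bh}^{p_t}=U_{\bh}^{p_t}$ and let $\widetilde{V}_{\bh}^{p_t}$ be the unique element of $Q_{\bh}^{p_t}(Q_T)$ with $\Pi_{h_t}^{(p_t-1,-1)}\widetilde{V}_{\bh}^{p_t}=\partial_t U_{\bh}^{p_t}$ and $\widetilde{V}_{\bh}^{p_t}(\cdot,0)=V_{0,h_{\bx}}$; existence and uniqueness come from Lemma~\ref{lem:24} (which is purely about the linear temporal structure and is unaffected by the nonlinearity). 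The first equation of~\eqref{eq:27} is verified as in Theorem~\ref{thm:25}. For the second, given $\lambda_{\bh}^{p_t-1}\in\partial_t Q_{\bh}^{p_t}(Q_T)$, pick the unique $W_{\bh}^{p_t}\in Q_{\bh,\bullet,0}^{p_t}(Q_T)$ with $\Pi_{h_t}^{(p_t-1,-1)}W_{\bh}^{p_t}=\lambda_{\bh}^{p_t-1}$, and run the computation from the converse part of Theorem~\ref{thm:25} backwards, noting that the nonlinear term transforms as $(\NL(U_{\bh}^{p_t}),\Pi_{h_t}^{(p_t-1,-1)}W_{\bh}^{p_t})_{L^2(Q_T)}=(\NL(\widetilde{U}_{\bh}^{p_t}),\lambda_{\bh}^{p_t-1})_{L^2(Q_T)}$ since $\widetilde{U}_{\bh}^{p_t}=U_{\bh}^{p_t}$. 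I do not anticipate a genuine obstacle here: the nonlinear term is a ``passenger'' that is handled identically to $F$, never differentiated in time, and never involved in the reconstruction arguments. The only point deserving a word of care is that the well-posedness of~\eqref{eq:28} (and~\eqref{eq:27}) is not claimed by this proposition — it merely asserts a bijection between solution sets — so no fixed-point or monotonicity argument for the nonlinearity is needed; this is why the proof reduces to the same algebra as in the linear case.
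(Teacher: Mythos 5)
Your proposal is correct and follows exactly the route the paper intends: the authors omit the proof of Proposition~\ref{prop:34} precisely because it is the ``straightforward extension'' of Theorem~\ref{thm:25} that you have written out, with the nonlinear term $\NL(U_{\bh}^{p_t})$ transported alongside $F$ through the chain of identities~\eqref{eq:13}--\eqref{eq:14} and its converse, and with Lemma~\ref{lem:24} supplying the reconstruction $\widetilde{V}_{\bh}^{p_t}$ unchanged. Your closing observation that the proposition asserts only a bijection between solution sets, so no well-posedness or monotonicity argument for $\NL$ is needed, is exactly the right point of care.
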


\noindent
For the formulation in~\eqref{eq:27}, the existence and uniqueness of discrete solutions have been established in the following cases:
\begin{itemize}
\item globally Lipschitz function $g$: in this case, $h_t$ is required to be sufficiently small (bounded by a constant independent of $h_{\bx}$), see~\cite[Theorem~2.1]{KarakashianMakridakis2005};
\item locally Lipschitz function $g$: in this case, $h_{\bx}$ and $h_t$ are required to be sufficiently (bounded by constants independent of $h_{\bx}$ and $h_t$); this follows by combining the results in~\cite[\S~2]{KarakashianMakridakis2005}, \cite[\S~2]{KarakashianMakridakis1999}, and~\cite[\S~2 and \S 4]{KarakashianMakridakis1998}.
\end{itemize}

\noindent
Additionally, assuming that the spatial domain~$\Omega$ satisfies the elliptic regularity property, that~$V_0 \in H^1_0(\Omega)$, and that the elliptic projection is used also to define~$V_{0,h_{\bx}}$ from~$V_0$, the following error estimates were derived in~\cite[Theorem~4.1]{KarakashianMakridakis2005}, under the assumption of existence of discrete solutions and sufficient regularity of the continuous solution~$U$:
\begin{equation} \label{eq:semilin_estimKM}
    \| U - \widetilde{U}_{\bh}^{p_t} \|_{C([0,T];L^2(\Omega))} + \| \partial_t U - \widetilde{V}_{\bh}^{p_t} \|_{C([0,T];L^2(\Omega))} \apprle  h_{\bx}^{p_{\bx}+1} + h_t^{p_t+1},
\end{equation}
where the implied constant is independent of~$h_t$ and~$h_{\bx}$.
Finally, we have the following energy conservation property.

\begin{lemma}
Suppose $F = 0$. Let $\widetilde{U}_{\bh}^{p_t}, \widetilde{V}_{\bh}^{p_t}$ be a solution of \eqref{eq:27}. Then, for all~$j=0,\ldots, N_t$, 
\begin{equation*}
\begin{aligned}
   & \frac{1}{2} \left( \| \widetilde{V}_{\bh}^{p_t}(\cdot,t_j)\|^2_{L^2(\Omega)} + \| c\nabla_{\bx} \widetilde{U}_{\bh}^{p_t}(\cdot,t_j)\|^2_{L^2(\Omega)} \right) + \| G(\widetilde{U}_{\bh}^{p_t}(\cdot,t_j))\|^2_{L^2(\Omega)}
   \\ & \hspace{2cm} = \frac{1}{2} \left( \| V_{0,h_{\bx}}
   \|^2_{L^2(\Omega)} + \| c\nabla_{\bx} U_{0,h_{\bx}}
   \|^2_{L^2(\Omega)} \right) + \| G(U_{0,h_{\bx}})
   \|^2_{L^2(\Omega)},
\end{aligned}
\end{equation*}
with~$G$ as in~\eqref{eq:25}.
\end{lemma}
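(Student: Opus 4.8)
The plan is to mimic the energy-conservation argument already sketched for the linear case right after item \emph{iii)}, adapting it to include the nonlinear term. The key observation is that for fixed $j\in\{0,\ldots,N_t\}$, the functions $\mathds{1}_{[0,t_j]}(t)\,\partial_t\widetilde{U}_{\bh}^{p_t}$ and $\mathds{1}_{[0,t_j]}(t)\,\partial_t\widetilde{V}_{\bh}^{p_t}$, although discontinuous in time, are \emph{elementwise} polynomials of degree at most $p_t-1$ in time with values in $S_{h_{\bx}}^{p_{\bx}}(\Omega)$, hence they lie in $\partial_t Q_{\bh}^{p_t}(Q_T)$ and are admissible test functions in \eqref{eq:27}. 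So first I would test the first equation of \eqref{eq:27} with $\chi_{\bh}^{p_t-1}=-\partial_t\widetilde{V}_{\bh}^{p_t}\mathds{1}_{[0,t_j]}$, test the second equation with $\lambda_{\bh}^{p_t-1}=\partial_t\widetilde{U}_{\bh}^{p_t}\mathds{1}_{[0,t_j]}$, and add the two. Using $F=0$, the mixed terms $(\widetilde{V}_{\bh}^{p_t},\partial_t\widetilde{V}_{\bh}^{p_t})$ and $(\partial_t\widetilde{V}_{\bh}^{p_t},\partial_t\widetilde{U}_{\bh}^{p_t})$ cancel against each other after inserting $\partial_t\widetilde{U}_{\bh}^{p_t}=\widetilde{V}_{\bh}^{p_t}$ (from the first equation, which holds in $L^2$ since both sides are in $\partial_t Q_{\bh}^{p_t}(Q_T)$), and one is left with
\begin{equation*}
   (\widetilde{V}_{\bh}^{p_t},\partial_t\widetilde{V}_{\bh}^{p_t})_{L^2(\Omega\times[0,t_j])} + (c^2\nabla_{\bx}\widetilde{U}_{\bh}^{p_t},\nabla_{\bx}\partial_t\widetilde{U}_{\bh}^{p_t})_{L^2(\Omega\times[0,t_j])} + (\NL(\widetilde{U}_{\bh}^{p_t}),\partial_t\widetilde{U}_{\bh}^{p_t})_{L^2(\Omega\times[0,t_j])} = 0.
\end{equation*}

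Next I would recognize each term as an exact time derivative. The first two integrands are $\tfrac12\partial_t|\widetilde{V}_{\bh}^{p_t}|^2$ and $\tfrac12\partial_t|c\nabla_{\bx}\widetilde{U}_{\bh}^{p_t}|^2$; integrating in $t$ over $[0,t_j]$ and using the fundamental theorem of calculus (legitimate since both fields are continuous in time on $[0,T]$, being in $Q_{\bh}^{p_t}(Q_T)$) gives the evaluations at $t_j$ minus the evaluations at $0$, and the latter are $\tfrac12\|V_{0,h_{\bx}}\|^2_{L^2(\Omega)}$ and $\tfrac12\|c\nabla_{\bx}U_{0,h_{\bx}}\|^2_{L^2(\Omega)}$ by the initial conditions $\widetilde{V}_{\bh}^{p_t}(\cdot,0)=V_{0,h_{\bx}}$, $\widetilde{U}_{\bh}^{p_t}(\cdot,0)=U_{0,h_{\bx}}$. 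For the nonlinear term, the chain rule gives $\NL(\widetilde{U}_{\bh}^{p_t})\,\partial_t\widetilde{U}_{\bh}^{p_t} = \partial_t\big(G(\widetilde{U}_{\bh}^{p_t})\big)$ pointwise, by \eqref{eq:25}; integrating in time and space yields $\int_\Omega G(\widetilde{U}_{\bh}^{p_t}(\bx,t_j))\dd\bx - \int_\Omega G(U_{0,h_{\bx}}(\bx))\dd\bx$. Collecting the three contributions gives the asserted identity.

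There is, however, a discrepancy between this natural identity and the statement as written: the statement has $\|G(\widetilde{U}_{\bh}^{p_t}(\cdot,t_j))\|_{L^2(\Omega)}^2$ and $\|G(U_{0,h_{\bx}})\|_{L^2(\Omega)}^2$, i.e.\ squared $L^2$ norms of $G$, whereas the argument above produces $\int_\Omega G(\widetilde{U}_{\bh}^{p_t}(\cdot,t_j))\dd\bx$ and $\int_\Omega G(U_{0,h_{\bx}})\dd\bx$. I would therefore prove the identity with the integral form of the $G$-terms, which is the correct Hamiltonian (cf.\ \eqref{eq:26}), treating the squared-norm notation in the statement as a typo; no sign or positivity hypothesis on $G$ is needed for this, only the primitive relation $G'=\NL$.

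The only genuine subtlety — and the main point to get right — is the admissibility and the precise handling of the characteristic-function test functions across the time nodes: one must check that $\partial_t\widetilde{U}_{\bh}^{p_t}\mathds{1}_{[0,t_j]}$ and $\partial_t\widetilde{V}_{\bh}^{p_t}\mathds{1}_{[0,t_j]}$ genuinely belong to $\partial_t Q_{\bh}^{p_t}(Q_T)$ (they do, since $t_j$ is a mesh node, so the restriction to each time element is either the full polynomial or zero), and that the fundamental theorem of calculus applies on $[0,t_j]$ despite the solution's time derivative being only piecewise polynomial — here continuity of $\widetilde{U}_{\bh}^{p_t}$ and $\widetilde{V}_{\bh}^{p_t}$ across nodes is what makes the telescoping of the per-element contributions collapse to the endpoint values $t_j$ and $0$. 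Everything else is the routine energy computation, identical in structure to the linear case already displayed in the excerpt.
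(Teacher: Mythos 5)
Your proposal is correct and follows essentially the same route as the paper: test with $\chi_{\bh}^{p_t-1}=-\partial_t\widetilde{V}_{\bh}^{p_t}\mathds{1}_{[0,t_j]}$ and $\lambda_{\bh}^{p_t-1}=\partial_t\widetilde{U}_{\bh}^{p_t}\mathds{1}_{[0,t_j]}$, add, and conclude via $\NL=G'$ and the fundamental theorem of calculus. Your observation that the $G$-terms should read $\int_\Omega G(\cdot)\,\dd\bx$ rather than $\|G(\cdot)\|^2_{L^2(\Omega)}$ is also correct — this is consistent with the Hamiltonian in~\eqref{eq:26} and is indeed a typo in the statement.
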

\begin{proof}
As in the linear case, consider testing \eqref{eq:27}, for $j=0,\ldots,N_t$, with 
\begin{equation*}
\chi_{\bh}^{p_t-1}(\bx,t) = -\partial_t \widetilde{V}_{\bh}^{p_t}(\bx,t) \mathds{1}_{[0,t_j]}(t) \quad \text{and} \quad \lambda_{\bh}^{p_t-1}(\bx,t) = \partial_t \widetilde{U}_{\bh}^{p_t}(\bx,t) \mathds{1}_{[0,t_j]}(t),
\end{equation*}
and adding the two equations. This gives the relation
\begin{equation*}
    (\widetilde{V}_{\bh}^{p_t}, \partial_t \widetilde{V}_{\bh}^{p_t})_{L^2(\Omega \times [0,t_j])} + (c^2 \nabla_{\bx} \widetilde{U}_{\bh}^{p_t}, \partial_t \widetilde{U}_{\bh}^{p_t})_{L^2(\Omega \times [0,t_j])} + (\NL(\widetilde{U}_{\bh}^{p_t}), \partial_t \widetilde{U}_{\bh}^{p_t})_{L^2(\Omega \times [0,t_j])}=0.
\end{equation*}
The result readily follows from~$\NL(U) = G'(U)$.
\end{proof}

\noindent
As in the linear case, the equivalence result of Proposition~\eqref{prop:34} allows the properties of~\eqref{eq:27} to be carried over to~\eqref{eq:28}. In particular, for the error estimates, which hold under the same assumptions as for the formulation in~\eqref{eq:27}, we have
\begin{equation} \label{eq:31b}
\begin{aligned} 
    \| U - U_{\bh}^{p_t} \|_{C([0,T];L^2(\Omega))} & \apprle h_{\bx}^{p_{\bx}+1} + h_t^{p_t+1} ,\\ 
    \| \partial_t U - \widetilde{V}_{\bh}^{p_t} \|_{C([0,T];L^2(\Omega))} & \apprle h_{\bx}^{p_{\bx}+1} + h_t^{p_t+1},
\end{aligned}
\end{equation}
where $\widetilde{V}_{\bh}^{p_t}\in Q_{\bh}^{p_t}(Q_T)$ is the reconstruction of~$\partial_t U_{\bh}^{p_t}$ defined in~\eqref{eq:22}. Without reconstructing~$\partial_t U_{\bh}^{p_t}$, we have
\begin{equation} \label{eq:32b}
    \| \partial_t U - \partial_t U_{\bh}^{p_t} \|_{L^2(Q_T)} \lesssim h_{\bx}^{p_{\bx}+1} + h_t^{p_t}.
\end{equation}
Finally, for~$F=0$, the energy conservation property reads, for $j=0,\ldots,N_t$,
\begin{equation*}
\begin{aligned}
   & \frac{1}{2} \left( \| \widetilde{V}_{\bh}^{p_t}(\cdot,t_j)\|^2_{L^2(\Omega)} + \| c\nabla_{\bx} {U}_{\bh}^{p_t}(\cdot,t_j)\|^2_{L^2(\Omega)} \right) + \| G(U_{\bh}^{p_t}(\cdot,t_j))\|^2_{L^2(\Omega)}
   \\ & \hspace{2cm} = \frac{1}{2} \left( \| V_{0,h_{\bx}} \|^2_{L^2(\Omega)} + \| c\nabla_{\bx} U_{0,h_{\bx}} \|^2_{L^2(\Omega)} \right) + \| G(U_{0,h_{\bx}}) \|^2_{L^2(\Omega)},
\end{aligned}
\end{equation*}

\subsubsection{Numerical results} \label{sec:321}

\noindent
In this section, we verify the convergence estimate in \eqref{eq:semilin_estimKM} for the sine-Gordon equation. We consider problem~\eqref{eq:24} with $g(U)= \sin(U)$ on $Q_T = \Omega \times (0,T)$, with $\Omega = (-20,20)$ and $T=1$, and data
\begin{equation*}
    c(x)=1, \qquad F(x,t) = 0, \qquad U_0(x) = 0, \qquad V_0(x) = \frac{4}{\gamma} \sech (\frac{x}{\gamma}), \quad \gamma = 1.1.
\end{equation*}
The exact solution of \eqref{eq:24} is
\begin{equation} \label{eq:33a}
    U(x,t) = 4\arctan (\phi(t) \sech(\frac{x}{\gamma})), \quad\quad 
        \phi(t) = \frac{1}{\sqrt{\gamma^2-1}}  \sin(\frac{t}{\gamma} \sqrt{\gamma^2-1}).
\end{equation}
Since
\begin{equation*}
    \max_{t \in (0,1), \,\, x \in \{-20,20\}} |U(x,t)| \le 10^{-16},
\end{equation*}
we can assume homogeneous Dirichlet boundary conditions. We test the formulation in~\eqref{eq:28} on uniform meshes for varying mesh sizes $h_t = h_{\bx}$, using $p_t = p_{\bx} = 1$ and $p_t = p_{\bx} = 2$. The nonlinear systems are solved using a fixed-point iteration. In Figure~\ref{fig:2}, we report the absolute errors in the $C^0([0,T];L^2(\Omega))$ and $H^1(0,T;L^2(\Omega))$ norms for $U - U_{\bh}^{p_t}$, as well as in the 
$C^0([0,T];L^2(\Omega))$ norm for $\partial_t U - \widetilde{V}_{\bh}^{p_t}$. The observed convergence behavior agrees with the estimates~\eqref{eq:31b} and~\eqref{eq:32b}.
\begin{figure}[h!]
    \centering
    \begin{subfigure}[b]{0.495\textwidth}
    \centering
         \includegraphics[width=\textwidth]{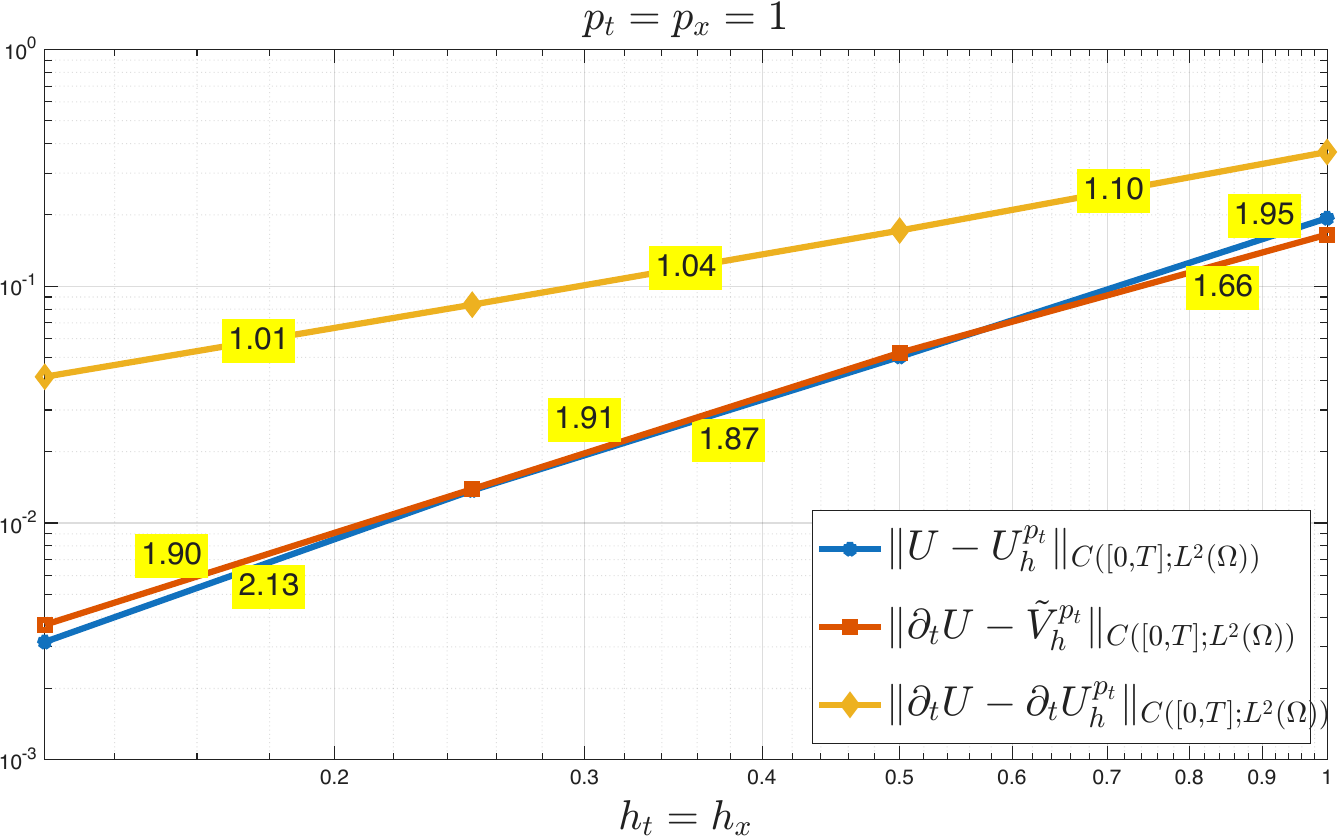}
     \end{subfigure}
    \begin{subfigure}[b]{0.495\textwidth}
    \centering
         \includegraphics[width=\textwidth]{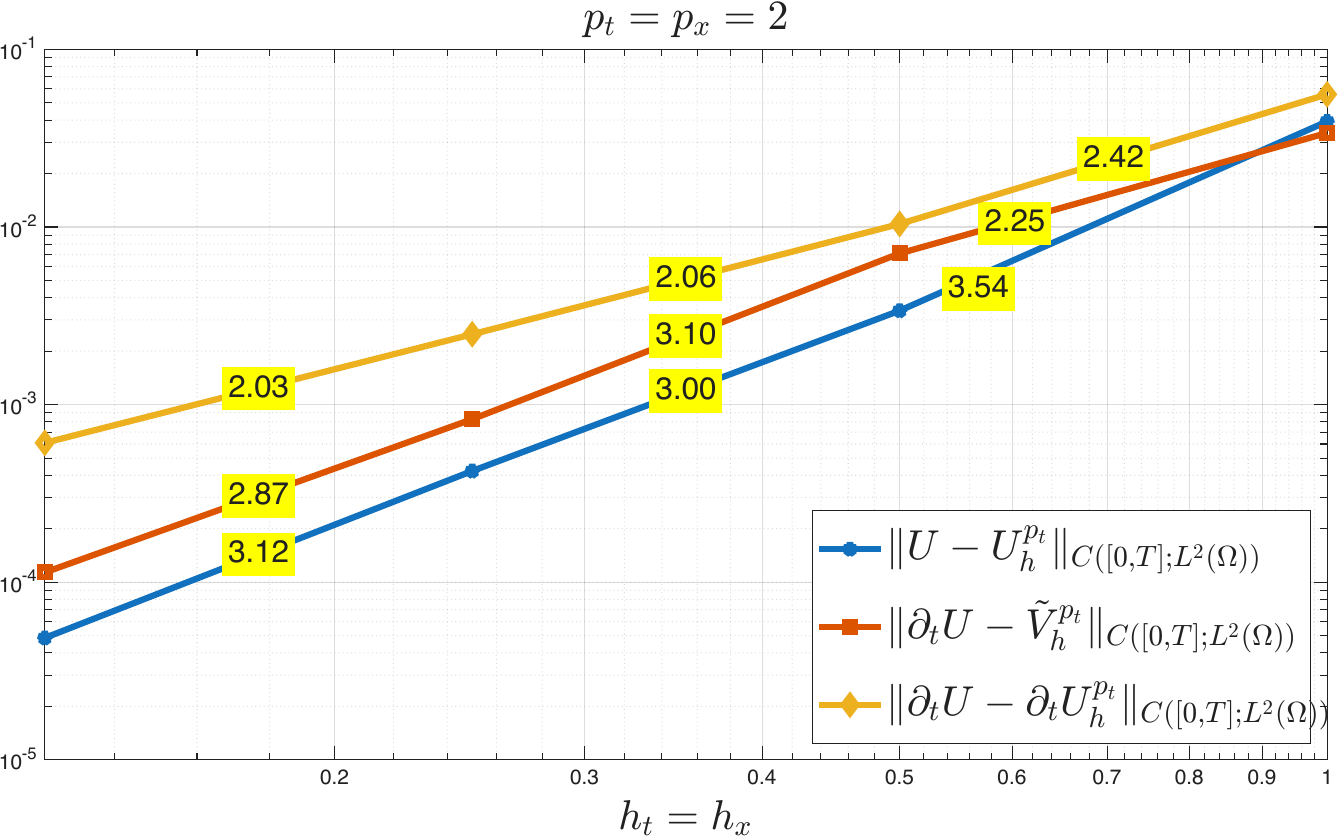}
     \end{subfigure}
\caption{Absolute errors with respect to the exact solution~\eqref{eq:33a} for varying mesh sizes $h_t=h_x$, using polynomial degrees \(p_t=p_x=1\) (left) and \(p_t=p_x=2\) (right).}
\label{fig:2}
\end{figure}

\subsection{Symplectic variants based on quadrature in time} \label{sec:33}
The second-order-in-time discretization~\eqref{eq:28} provides unconditional stability and exact conservation of a discrete energy.
However, the integration of nonlinear terms requires a quadrature rule, and therefore conservation of energy in practice holds up to quadrature error. Therefore, if one wants to achieve conservation of energy up to machine precision, the quadrature rule should be selected accordingly, making the scheme problem-dependent. For example, in~\cite{FrenchSchaeffer90}, French and Sch\"{a}ffer propose a dynamic quadrature rule. Another route is to fix the quadrature rule once and for all, therefore giving up energy conservation, possibly in favor of other geometric properties. Interesting choices are the Gauss--Legendre and Gauss--Lobatto quadratures, since they make the scheme symplectic in time, as we show in the following.

\smallskip
\noindent 
We emphasize that, in this section, we adopt a slightly different point of view, departing from space-time schemes. In fact, the methods considered in this section can all be understood as a semi-discretization in space $S^{p_{\bx}}_{h_{\bx}}(\Omega)$ followed by a discretization in time with a Runge--Kutta scheme. The space semi-discretization has the important property of retaining the Hamiltonian structure of the wave equation, see~\cite{SanchezValenzuela25}.
Therefore, it make sense to look for symplectic integrators in time. In particular, when the system is non-autonomous, the energy is not conserved even at the continuous level, whereas the symplectic form is. Therefore, symplectic schemes are preferable to energy-conserving ones in this context.

\begin{remark}
Equivalence between Galerkin schemes for ordinary differential equations (ODEs) and Runge–Kutta methods has been explored in various contexts. In \cite{Hulme72a,Hulme72b}, a one-step Galerkin method is proposed, in which the trial function is continuous, while the test function is discontinuous but of the same polynomial degree. The author proves the equivalence of this formulation with a Runge–Kutta collocation method. In \cite{FrenchSchaeffer90}, continuous–discontinuous Galerkin methods for general ODEs are derived, and it is shown that the variant employing Lagrange interpolation at Gauss–Legendre points is equivalent to an implicit Runge–Kutta method. Finally, \cite{Bottasso97} establishes that a general class of finite element time-integration schemes, when combined with an appropriate quadrature rule, is equivalent to a Runge–Kutta method; in particular, the use of Gauss–Legendre quadrature leads to the Gauss Runge–Kutta scheme.
\end{remark}
\noindent
We use the notation $(\cdot,\cdot)_{L^2_{\GLe,m_1}(Q_T)}$ and $(\cdot,\cdot)_{L^2_{\GLo,m_2}(Q_T)}$ to denote the $L^2$ inner products over $Q_T$ computed by means of quadrature rules in time with $m_1 \ge 1$ Gauss--Legendre and $m_2 \ge 2$ Gauss--Lobatto nodes, respectively, in each time slab $[t_{n-1},t_n]$. We also recall that Gauss--Legendre quadrature formulas are exact for polynomials of degree up to $2m_1 - 1$, whereas Gauss--Lobatto quadrature formulas are exact for polynomials of degree up to $2m_2 - 3$.

\smallskip
\noindent
Assuming~$F\in C([0,T];L^2(\Omega))$, we introduce the following two variants of \eqref{eq:28}:

%%%%%%%%%%%%%%%%%%%%%%%%%%%%%%%%%%%%%%%%%%%%%%%%%%%%%%%
\begin{tcolorbox}[
    colframe=black!50!white,
    colback=blue!5!white,
    boxrule=0.5mm,
    sharp corners,
    boxsep=0.5mm,
    top=0.5mm,
    bottom=0.5mm,
    right=0.25mm,
    left=0.1mm
]
    \begingroup
    \setlength{\abovedisplayskip}{0pt}
    \setlength{\belowdisplayskip}{0pt}
\, find $U_{\bh}^{p_t} \in Q^{p_t}_{\bh}(Q_T)$ such that $U_{\bh}^{p_t}(\cdot,0) = U_{0,h_{\bx}}$ in~$\Omega$ and\medskip
\begin{equation} \label{eq:29}
\begin{aligned}
	-(& \partial_t U_{\bh}^{p_t},  \partial_t W_{\bh}^{p_t})_{L^2(Q_T)} + (c^2 \nabla_{\bx} U_{\bh}^{p_t} , \nabla_{\bx} W_{\bh}^{p_t})_{L_{\GLe,p_t}^2(Q_T)} 
    \\ & + (\NL(U_{\bh}^{p_t}), W_{\bh}^{p_t})_{L_{\GLe,p_t}^2(Q_T)} 
     = (F,W_{\bh}^{p_t})_{L_{\GLe,p_t}^2(Q_T)} + (V_{0,h_{\bx}},W_{\bh}^{p_t}(\cdot,0))_{L^2(\Omega)}
\end{aligned}
\end{equation}

\medskip\noindent
\, for all $W_{\bh}^{p_t} \in Q^{p_t}_{\bh,\bullet,0}(Q_T)$;
    \endgroup
\end{tcolorbox}
%%%%%%%%%%%%%%%%%%%%%%%%%%%%%%%%%%%%%%%%%%%%%%%%%%%%%%%

%%%%%%%%%%%%%%%%%%%%%%%%%%%%%%%%%%%%%%%%%%%%%%%%%%%%%%%
\begin{tcolorbox}[
    colframe=black!50!white,
    colback=blue!5!white,
    boxrule=0.5mm,
    sharp corners,
    boxsep=0.5mm,
    top=0.5mm,
    bottom=0.5mm,
    right=0.25mm,
    left=0.1mm
]
    \begingroup
    \setlength{\abovedisplayskip}{0pt}
    \setlength{\belowdisplayskip}{0pt}
\, find $U_{\bh}^{p_t} \in Q^{p_t}_{\bh}(Q_T)$ such that $U_{\bh}^{p_t}(\cdot,0) = U_{0,h_{\bx}}$ in~$\Omega$ and\medskip
\begin{equation} \label{eq:30}
\begin{aligned}
	-(& \partial_t U_{\bh}^{p_t}, \partial_t W_{\bh}^{p_t})_{L^2(Q_T)} + (c^2 \nabla_{\bx} U_{\bh}^{p_t}, \nabla_{\bx}  W_{\bh}^{p_t})_{L_{\GLo,p_t+1}^2(Q_T)} 
    \\ & + (\NL(U_{\bh}^{p_t}), W_{\bh}^{p_t})_{L_{\GLo,p_t+1}^2(Q_T)}
     = (F,W_{\bh}^{p_t})_{L_{\GLo,p_t+1}^2(Q_T)} + (V_{0,h_{\bx}},W_{\bh}^{p_t}(\cdot,0))_{L^2(\Omega)}
\end{aligned}
\end{equation}

\medskip\noindent
\, for all $W_{\bh}^{p_t} \in Q^{p_t}_{\bh,\bullet,0}(Q_T)$.
    \endgroup
\end{tcolorbox}
%%%%%%%%%%%%%%%%%%%%%%%%%%%%%%%%%%%%%%%%%%%%%%%%%%%%%%%

\smallskip

\begin{remark} \label{rem:37}
Let~$\Int: C([0,T])\to \partial_t S_{h_t}^{p_t}(0,T)$ be the Lagrange interpolator defined, element-by-element, at the~$p_t$ Gauss--Legendre nodes. Analogously to the $L^2$ projector~$\Pi_{h_t}^{(p_t-1,-1)}$, we denote by the same symbol~$\Int$ both this operator and its space--time extension, acting on functions in~$C([0,T];L^2(\Omega))$. For all $v_{h_t}^{p_t} \in S_{h_t}^{p_t}(0,T)$,
\begin{equation} \label{eq:31}
    \Pi_{h_t}^{(p_t-1,-1)} v_{h_t}^{p_t}
    = \Int v_{h_t}^{p_t};
\end{equation}
see~\eqref{eq:9}. Using the operator~$\Int$, the scheme in~\eqref{eq:29} can be written equivalently in the following form:
find $U_{\bh}^{p_t} \in Q^{p_t}_{\bh}(Q_T)$ such that $U_{\bh}^{p_t}(\cdot,0) = U_{0,h_{\bx}}$ in~$\Omega$ and  \medskip
\begin{equation} \label{eq:32}
\begin{aligned}
	-(& \partial_t U_{\bh}^{p_t},  \partial_t W_{\bh}^{p_t})_{L^2(Q_T)} + (\Int c^2 \nabla_{\bx} U_{\bh}^{p_t}, \nabla_{\bx} W_{\bh}^{p_t})_{L^2(Q_T)} 
    \\ & + (\Int \NL(U_{\bh}^{p_t}), W_{\bh}^{p_t})_{L^2(Q_T)} 
     = (\Int F,W_{\bh}^{p_t})_{L^2(Q_T)} + (V_{0,h_{\bx}},W_{\bh}^{p_t}(\cdot,0))_{L^2(\Omega)}
\end{aligned}
\end{equation}
Then, from the reformulation~\eqref{eq:32} of~\eqref{eq:29}, it is clear that the methods in~\eqref{eq:28} and~\eqref{eq:29} differ only by the presence of~$\Int$ in the nonlinear and source terms (the projector~$\Pi_{h_t}^{(p_t-1,-1)}$ applied to the test functions in these terms in~\eqref{eq:29} is redundant and is therefore omitted). Equivalently, taking into account~\eqref{eq:31}, this difference can also be interpreted as interpolating the first argument of these terms rather than the test functions.
\end{remark}

\smallskip
\noindent
In the following theorem, we show that the formulations in~\eqref{eq:29} and~\eqref{eq:30} correspond to suitable collocation Runge–Kutta time discretizations, which in turn implies their symplecticity.

\begin{theorem}\label{th:GLGL}
Schemes~\eqref{eq:29} and~\eqref{eq:30} are equivalent, respectively, 
to the Gauss--Legendre 
and the pair Lobatto IIIA/IIIB Runge--Kutta time discretizations applied to the first-order-in-time formulation~\eqref{eq:24} of the semilinear wave equation, semi-discretized in space with $S_{h_{\bx}}^{p_{\bx}}(\Omega)$ spaces. Therefore, both schemes are symplectic.
\end{theorem}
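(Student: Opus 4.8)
The plan is to work element-by-element on a generic time slab $[t_{n-1},t_n]$ and identify the degrees of freedom of $U_{\bh}^{p_t}$ there with the stage values of a Runge--Kutta method applied to the spatially semi-discrete system. First I would set up coordinates: write the semi-discrete-in-space problem as an ODE system for the coefficient vectors of $U_{\bh}^{p_t}(\cdot,t)$ and of the reconstruction $\widetilde V_{\bh}^{p_t}(\cdot,t)$, using that the space discretization $S_{h_{\bx}}^{p_{\bx}}(\Omega)$ preserves the Hamiltonian structure (as recalled via \cite{SanchezValenzuela25}): the mass matrix is symmetric positive definite and the stiffness plus nonlinear force comes from a gradient, so the semi-discrete system is a (non-autonomous) Hamiltonian ODE $\dot y = J^{-1}\nabla H(y,t)$. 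Then, on each slab, $U_{\bh}^{p_t}$ restricted to $[t_{n-1},t_n]$ is a polynomial of degree $p_t$; equivalently $\partial_t U_{\bh}^{p_t}$ is a polynomial of degree $p_t-1$, determined by its values at the $p_t$ Gauss--Legendre nodes (resp. a suitable set of Lobatto nodes). This is exactly the structure of a collocation method with those nodes.

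Next I would carry out the identification for scheme~\eqref{eq:29} in detail and then indicate the parallel argument for~\eqref{eq:30}. Using the reformulation~\eqref{eq:32} from Remark~\ref{rem:37}, the test functions in $Q_{\bh,\bullet,0}^{p_t}(Q_T)$ whose time parts have derivatives spanning polynomials of degree $\le p_t-1$ supported on a single slab give, after integration by parts in time exactly as in the proof of Theorem~\ref{thm:25}, the two relations: $\partial_t U_{\bh}^{p_t} = \widetilde V_{\bh}^{p_t}$ tested against degree-$(p_t-1)$ polynomials in time, and $\partial_t \widetilde V_{\bh}^{p_t} + (\text{stiffness} + g + F \text{ terms interpolated at GL nodes})$ tested against degree-$(p_t-1)$ polynomials in time, together with continuity of $U_{\bh}^{p_t}$ and $\widetilde V_{\bh}^{p_t}$ across nodes (the latter via~\eqref{eq:22} and Lemma~\ref{lem:24}). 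Because the Gauss--Legendre quadrature with $p_t$ points is exact for degree $2p_t-1$, testing a degree-$(p_t-1)$ polynomial identity against the Lagrange basis at those nodes is equivalent to enforcing the identity pointwise at the $p_t$ GL nodes. Hence the slab equations are precisely the collocation conditions at the $p_t$ Gauss--Legendre points for the first-order system, which is the classical characterization of the $p_t$-stage Gauss--Legendre RK method (see e.g. the references in the preceding remark and standard texts); I would cite this known equivalence between collocation at Gauss points and Gauss--Legendre RK rather than reprove it. For~\eqref{eq:30}, the Gauss--Lobatto quadrature with $p_t+1$ points is exact only up to degree $2p_t-1$, which is exactly the degree of the integrand $(\partial_t U_{\bh}^{p_t})(\partial_t W_{\bh}^{p_t})$ but not of the stiffness/nonlinear terms; the mismatch between the rule used for the $\partial_t$--$\partial_t$ term (exact $L^2$) and the Lobatto rule for the remaining terms is what produces the \emph{pair} Lobatto IIIA/IIIB structure, with IIIA acting on $U$ and IIIB on $V$. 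I would make this precise by writing out the two stage sets (function values of $U_{\bh}^{p_t}$ at Lobatto nodes versus of $\widetilde V_{\bh}^{p_t}$) and matching the resulting Butcher tableaux to the known IIIA/IIIB tableaux.

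Finally, symplecticity follows immediately: the Gauss--Legendre RK methods and the Lobatto IIIA/IIIB pair are classical symplectic (partitioned) Runge--Kutta integrators for Hamiltonian systems — Gauss methods satisfy the symplecticity condition $b_i a_{ij} + b_j a_{ji} - b_i b_j = 0$, and the IIIA/IIIB pair satisfies the analogous condition for partitioned methods — so applying them to the Hamiltonian semi-discrete system yields a symplectic map. I would state this citing \cite[Theorem 2.1]{SanzSernaCalvo1994} (and the references on symplectic RK/partitioned RK methods) together with the structure-preservation of the space semi-discretization from \cite{SanchezValenzuela25}.

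I expect the main obstacle to be the second scheme: correctly matching the Gauss--Lobatto-with-quadrature formulation to the Lobatto IIIA/IIIB \emph{pair} rather than to a single method. The subtlety is that the $\partial_t U_{\bh}^{p_t}\,\partial_t W_{\bh}^{p_t}$ term is integrated exactly (not with the Lobatto rule), so the two components of the partitioned system see slightly different effective quadratures; one must check that the induced stage equations and the end-of-slab update formulas reproduce exactly the IIIA tableau on one variable and the IIIB tableau on the other, including the treatment of the boundary (Lobatto) nodes where the two tableaux differ. Verifying this boundary bookkeeping — and confirming that continuity of $U_{\bh}^{p_t}$ and of $\widetilde V_{\bh}^{p_t}$ across slabs furnishes exactly the initial stage values needed — is the delicate point; once that is settled, the symplecticity is a direct invocation of known results.
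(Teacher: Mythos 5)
Your treatment of scheme~\eqref{eq:29} is essentially the paper's: pass to the equivalent first-order form, observe that testing against the Lagrange basis at the $p_t$ Gauss--Legendre nodes turns the slab equations into pointwise collocation conditions (exactness of the rule up to degree $2p_t-1$), and invoke the classical identification of collocation at Gauss points with the Gauss--Legendre Runge--Kutta method; the paper does precisely this, writing out the collocation identification in Appendix~\ref{app:1}.

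For scheme~\eqref{eq:30}, however, your plan has a genuine gap, and you have correctly sensed where it is but not how to close it. Lobatto IIIB is \emph{not} a collocation method, so the strategy of reading off stage equations as pointwise conditions at Lobatto nodes and then ``matching Butcher tableaux'' does not go through for the velocity component. The paper's key device is twofold: (i) it keeps the auxiliary velocity $V_{\bh}^{p_t-1}\in\partial_t Q_{\bh}^{p_t}(Q_T)$ as a \emph{discontinuous} piecewise polynomial of degree $p_t-1$ (not the continuous reconstruction $\widetilde V_{\bh}^{p_t}$ whose Lobatto-node values you propose to use as stages), and (ii) it introduces a hybridized-in-time formulation~\eqref{eq:35} with nodal trace unknowns $\widehat V^n_{h_{\bx}}$, whose jump terms at $t_{n-1}^+$ and $t_n^-$ reproduce exactly the end-of-slab correction formulas $v^{p_t-1}(t_{n-1})=z^{n-1}-h_t^{(n)}\omega_0(\dots)$ and $z^n=v^{p_t-1}(t_n)-h_t^{(n)}\omega_{p_t}(\dots)$ in the \emph{discontinuous collocation} characterization~\eqref{eq:36} of the Lobatto IIIA/IIIB pair from Hairer--Lubich--Wanner. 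Without this characterization (or an equivalent one), the boundary bookkeeping you flag as the delicate point cannot be completed. Moreover, your proposed mechanism for why the \emph{pair} arises --- a mismatch between exact integration of the $\partial_t U_{\bh}^{p_t}\,\partial_t W_{\bh}^{p_t}$ term and Lobatto quadrature of the remaining terms --- is not the actual source: that integrand has degree $2p_t-2\le 2p_t-1$, so the $(p_t+1)$-point Lobatto rule is exact on it and the choice of rule there is immaterial. The IIIA/IIIB structure comes instead from the degree imbalance between the two components ($U$ of degree $p_t$ versus $V$ of degree $p_t-1$ with jumps) encoded in the hybridized/discontinuous-collocation form.
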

\begin{proof}
Proceeding as in Section~\ref{sec:24}, it can be proved that the method in~\eqref{eq:29} is equivalent (in the sense of Proposition~\ref{prop:34}) to the following variant of~\eqref{eq:27}: find $\widetilde{U}_{\bh}^{p_t}, \widetilde{V}_{\bh}^{p_t} \in Q_{\bh}^{p_t}(Q_T)$ such that $\widetilde{U}_{\bh}^{p_t}(\cdot,0) = U_{0,h_{\bx}}$, $ \widetilde{V}_{\bh}^{p_t}(\cdot,0) = V_{0,h_{\bx}}$ in~$\Omega$, and
\begin{equation} \label{eq:33}
	\begin{cases}
		(\partial_t \widetilde{U}_{\bh}^{p_t}, \chi_{\bh}^{p_t-1})_{L^2(Q_T)} - (\widetilde{V}_{\bh}^{p_t}, \chi_{\bh}^{p_t-1})_{L^2(Q_T)}=0 & \text{for all~} \chi_{\bh}^{p_t-1} \in \partial_t Q_{\bh}^{p_t}(Q_T), \vspace{0.05cm} \\
		(\partial_t \widetilde{V}_{\bh}^{p_t}, \lambda_{\bh}^{p_t-1})_{L^2(Q_T)} + (c^2 \nabla_{\bx} \widetilde{U}_{\bh}^{p_t}, \nabla_{\bx} \lambda_{\bh}^{p_t-1})_{L^2(Q_T)} &
        \vspace{0.05cm} \\ \hspace{1.3cm}  + \, (\NL(\widetilde{U}_{\bh}^{p_t}), \lambda_{\bh}^{p_t-1})_{L_{\GLe,p_t}^2(Q_T)} = (F, \lambda_{\bh}^{p_t-1})_{L_{\GLe,p_t}^2(Q_T)} & \text{for all~} \lambda_{\bh}^{p_t-1} \in \partial_t Q_{\bh,\bullet,0}^{p_t}(Q_T).
	\end{cases}
\end{equation}
It is well-known (see, e.g. \cite{FrenchSchaeffer90}) that scheme \eqref{eq:33} is equivalent to a Gauss--Legendre Runge--Kutta discretization in time, and Gauss--Legendre Runge--Kutta time discretizations are known to be symplectic (see \cite{Lasagni88,SanzSerna88}).
For completeness, we include the direct proof of  the equivalence in the Appendix.

\smallskip
\noindent 
We consider now the method in~\eqref{eq:30}. As in Lemma \ref{lem:23}, an equivalent formulation of~\eqref{eq:30} is the following: find $U_{\bh}^{p_t} \in Q_{\bh}^{p_t}(Q_T)$ and~$V_{\bh}^{p_t-1} \in \partial_t Q_{\bh}^{p_t}(Q_T)$ such that $U_{\bh}^{p_t}(\cdot,0) = U_{0,h_{\bx}}$ in~$\Omega$ and
\begin{equation} \label{eq:34}
	\begin{cases}
		(\partial_t U_{\bh}^{p_t}, \chi_{\bh}^{p_t-1})_{L^2(Q_T)} - (V_{\bh}^{p_t-1}, \chi_{\bh}^{p_t-1})_{L^2(Q_T)} = 0 & \text{for all~} \chi_{\bh}^{p_t-1} \in \partial_t Q_{\bh}^{p_t}(Q_T), \vspace{0.05cm} \\
		-(V_{\bh}^{p_t-1}, \partial_t W_{\bh}^{p_t})_{L^2(Q_T)} + (c^2 \nabla_{\bx} U_{\bh}^{p_t}, \nabla_{\bx} W_{\bh}^{p_t})_{L_{\GLo,p_t+1}^2(Q_T)} &
        \vspace{0.05cm} \\ \hspace{1.3cm}  + \, (\NL(U_{\bh}^{p_t}), W_{\bh}^{p_t})_{L_{\GLo,p_t+1}^2(Q_T)} = (F, W_{\bh}^{p_t})_{L_{\GLo,p_t+1}^2(Q_T)} &
        \\ \hspace{5cm}+ (V_{0,h_{\bx}},W_{\bh}^{p_t}(\cdot,0))_{L^2(\Omega)} & \text{for all~} W_{\bh}^{p_t} \in Q_{\bh,\bullet,0}^{p_t}(Q_T).
	\end{cases}
\end{equation}
To prove that \eqref{eq:34} is actually a Lobatto IIIA/IIIB Runge--Kutta method, we consider the following hybridized-in-time formulation: find $U_{\bh}^{p_t} \in Q_{\bh}^{p_t}(Q_T)$, $V_{\bh}^{p_t-1} \in \partial_t Q_{\bh}^{p_t}(Q_T)$, and $\widehat{V}_{h_{\bx}}^n \in S_{h_{\bx}}^{p_{\bx}}$ for $n=0,\ldots,N_t$ such that $\widehat{V}_{h_{\bx}}^0 = V_{0,h_{\bx}}$, $U_{\bh}^{p_t}(\cdot,0)= U_{0,h_{\bx}}$ in $\Omega$, and 
\begin{equation} \label{eq:35}
	\begin{cases}
		(\partial_t U_{\bh}^{p_t}, \chi_{\bh}^{p_t-1})_{L^2(Q_T)} - (V_{\bh}^{p_t-1}, \chi_{\bh}^{p_t-1})_{L^2(Q_T)}=0 & \text{for all~} \chi_{\bh}^{p_t-1} \in \partial_t Q_{\bh}^{p_t}(Q_T), \vspace{0.05cm}
        \\ (\partial_t V_{\bh}^{p_t-1}, \lambda_{h_{\bx},n}^{p_t})_{L^2(\Omega \times (t_{n-1},t_{n}))} &
        \\ \hspace{1cm} + (\widehat{V}_{h_{\bx}}^n - V^{p_t-1}_{\bh}(\cdot, t_{n}^{-}),\lambda_{h_{\bx},n}^{p_t}(\cdot,t_{n}))_{L^2(\Omega)} & \vspace{0.05cm}
        \\ \hspace{1cm} - (\widehat{V}_{h_{\bx}}^{n-1}  - V^{p_t - 1}_{\bh}(\cdot, t_{n-1}^+), \lambda_{h_{\bx},n}^{p_t}(\cdot,t_{n-1}))_{L^2(\Omega)} &
        \\ \hspace{1cm} + (c^2 \nabla_{\bx} U_{\bh}^{p_t}, \nabla_{\bx} \lambda_{h_{\bx},n}^{p_t})_{L_{\GLo,p_t+1}^2(\Omega \times (t_{n-1},t_{n}))} &
        \\ \hspace{1cm} = (F, \lambda_{h_{\bx},n}^{p_t})_{L_{\GLo,p_t+1}^2(\Omega \times (t_{n-1},t_{n}))} & \text{for all~} \lambda_{h_{\bx},n}^{p_t} \in S_{h_{\bx}}^{p_{\bx}}(\Omega) \times \mathbb{P}^{p_t}(t_{n-1},t_{n}),
        \\ \phantom{\hspace{1cm} = (F, \lambda_{h_{\bx},n}^{p_t})_{L_{\GLo,p_t+1}^2(\Omega \times (t_{n-1},t_{n}))}} & \text{for all~}n = 1,\ldots,N_t.
	\end{cases}
\end{equation}
The local test functions~$\lambda_{h_{\bx},n}^{p_t}$ are restrictions to each time slab of global functions~$\lambda_{\bh}^{p_t}\in \partial_t Q^{p_t+1}_{\bh}(Q_T)$. If $U_{\bh}^{p_t}$, $V^{p_t - 1}_{\bh}$, and $\widehat{V}_{h_{\bx}}^n$, $n=0,\ldots,N_t,$
solve the hybridized formulation~\eqref{eq:35}, then $U_{\bh}^{p_t}$ and $V^{p_t - 1}_{\bh}$ solve~\eqref{eq:34}. This can be easily seen by testing the block of equations forming the second part of~\eqref{eq:35} with~$\lambda_{\bh}^{p_t}=W_{\bh}^{p_t}\in Q^{p_t}_{\bh, \bullet, 0}(Q_T)$, which is possible since $Q^{p_t}_{\bh, \bullet, 0}(Q_T) \subset \partial_t Q^{p_t+1}_{\bh}(Q_T)$. Viceversa, let $U_{\bh}^{p_t}$ and $V^{p_t - 1}_{\bh} $ solve~\eqref{eq:34}. Then, $\{\widehat{V}_{h_{\bx}}^n\}_n$ can be computed sequentially as follows: given $\widehat{V}_{h_{\bx}}^{n-1}$, compute $\widehat{V}_{h_{\bx}}^n$ by solving the linear system associated with a mass matrix in space, which is obtained isolating all terms not involving $\widehat{V}_{h_{\bx}}^n$ in \eqref{eq:35}. It follows that $U_{\bh}^{p_t}$, $V^{p_t - 1}_{\bh}$ and the $\widehat{V}_{h_{\bx}}^n$ satisfy the the second equation in~\eqref{eq:35} by construction.
\smallskip

\noindent
We show now that \eqref{eq:35} is a Lobatto IIIA/IIIB Runge--Kutta method. We start by recalling the discontinuous collocation formulation of Lobatto IIIA/IIIB pairs in the case of a partitioned system of the type
\begin{equation*}
    \begin{cases}
       \mdot y = f(z,t),
        \\ \mdot z = g(y,t);
    \end{cases}
\end{equation*}
see \cite[Proof of Theorem 2.2, II.2.2]{HairerLubichWannerBook} for the autonomous case. In each time element $[t_{{n-1}}, t_{{n}}]$, given~$y^{{n-1}}$ and~$z^{{n-1}}$,  we look for polynomials $u^{p_t+1} \in \mathbb{P}^{p_t+1}(t_{{n-1}},t_{{n}})$ and $v^{{p_t-1}} \in\mathbb{P}^{p_t-1}(t_{{n-1}},t_{{n}})$ satisfying, with $h_t^{(n)} := t_{{n}}-t_{{n-1}}$,
\begin{equation} \label{eq:36}
\begin{aligned}
    \begin{cases}
        u^{p_t+1}(t_{{n-1}})= y^{{n-1}} &
        \\  \mdot u^{p_t+1}(t_n^i) = f(v^{{p_t-1}}(t_n^i), t^i_n) & \text{for all~} i = 0, \dots, p_t,
        \\
        y^{{n}} = u^{p_t+1}(t_{{n}}), &
        \\ v^{{p_t-1}}(t_{{n-1}}) = z^{{n-1}} - h_t^{(n)} \omega_0 (\mdot{v}^{{p_t-1}}(t_{{n-1}}) - g(u^{p_t+1}(t_{{n-1}}), t_{{n-1}})), \\
        \mdot{v}^{{p_t-1}}(t_n^i) = g(u^{p_t+1}(t_n^i), t_n^i) & \text{for all~} i = 1, \dots, p_t -1, \\
        z^{{n}} = v^{{p_t-1}}(t_{{n}}) - h_t^{(n)} \omega_{p_t}(\mdot{v}^{{p_t-1}}(t_{{n}}) - g(u^{p_t+1}(t_{{n}}), t_{{n}}) ).
    \end{cases}
\end{aligned}
\end{equation}
Here, $c_0, \dots, c_{p_t}$ and $\omega_0, \dots, \omega_{p_t}$ are the Gauss--Lobatto nodes and weights, respectively, on $[0,1]$, and $t_n^i \coloneqq t_{{n-1}} + c_i h_t^{(n)}$. 
In the special case $f(z, t) = z$, {the second equation in~\eqref{eq:36}} %the collocation condition for $\mdot u^{p_t+1}$  
implies that $\mdot u^{p_t+1}$ is a polynomial of degree $p_t - 1$, and thus $u^{p_t+1}$ is actually a polynomial of degree $p_t$. 

\smallskip
\noindent 
{Comparing~\eqref{eq:35} with~\eqref{eq:36}, $y$ corresponds to~$U_{\bh}^{p_t}$ at the mesh nodes, while $u^{p_t+1}$ (a polynomial of degree at most~$p_t$ in~$[t_{n-1},t_n]$, as observed above) corresponds to~$U_{\bh}^{p_t}$ at the interior Gauss-Lobatto nodes. In turns, $z$ represents~$\widehat{V}_{h_{\bx}}^n$, and~$v^{p_t-1}$ corresponds to~$V_{\bh}^{p_t-1}$ at the interior Gauss-Lobatto nodes.}

\smallskip
\noindent 
The first equation in \eqref{eq:35} directly implies that
\begin{equation}\label{eq:GL1}
    \partial_t U^{p_t}_{\bh} = V^{p_t - 1}_{\bh}
\end{equation}
holds pointwise, in particular at Gauss--Lobatto points. We test the second equation in \eqref{eq:35} with the functions of the Lagrange basis for polynomials of degree~$\le p_t$ associated with the~$p_t+1$ Gauss--Lobatto nodes in~$[t_n,t_{n+1}]$. Recalling that the Gauss--Lobatto quadrature with~$p_t+1$ nodes is exact for polynomials of degree~$\le 2p_t-1$, we obtain the following relations at~$t_{n-1}$, at the interior Gauss--Lobatto points, and at~$t_n$:
\begin{equation}\label{eq:GL2}
\begin{aligned}
    & {h_t^{(n)}} \, \omega_0(\partial_t V^{p_t-1}_{\bh}(\cdot, t_{{n-1}}), \lambda_{h_{\bx,n}}^{p_t}(\cdot,t_{{n-1}}))_{L^2(\Omega)} - (\widehat{V}_{h_{\bx}}^{n-1} - V^{p_t-1}_{\bh}(\cdot, t_{{n-1}}), \lambda_{h_{\bx,n}}^{p_t}(\cdot,t_{{n-1}}))_{L^2(\Omega)} 
    \\ & \hspace{1cm} + {h_t^{(n)}}\omega_0(c^2\nabla_{\bx} {U}^{p_t}_{\bh}(\cdot, t_{{n-1}}), \nabla_{\bx}\lambda_{h_{\bx,n}}^{p_t}(\cdot,t_{{n-1}}))_{L^2(\Omega)}= {h_t^{(n)}}\omega_0(F(\cdot, t_{{n-1}}), \lambda_{h_{\bx,n}}^{p_t}(\cdot,t_{{n-1}}))_{L^2(\Omega)},
    \\ & {h_t^{(n)}} \omega_i(\partial_tV^{p_t-1}_{\bh}(\cdot, t^i_n), \lambda_{h_{\bx,n}}^{p_t}(\cdot,t^i_n))_{L^2(\Omega)}  + {h_t^{(n)}} \omega_i(c^2\nabla_{\bx} {U}^{p_t}_{\bh}(\cdot, t^i_n), \nabla_{\bx} \lambda_{h_{\bx,n}}^{p_t}(\cdot,t^i_n))_{L^2(\Omega)} 
    \\ & \hspace{5.5cm} = \omega_i(F(\cdot, t^i_n), \lambda_{h_{\bx,n}}^{p_t}(\cdot,t^i_n))_{L^2(\Omega)}
    {\qquad\qquad\text{for all}~i=1,\ldots,p_t-1,}
    \\ & {h_t^{(n)}} \omega_{p_t}(\partial_t V^{p_t-1}_{\bh}(\cdot, t_{{n}}), \lambda_{h_{\bx,n}}^{p_t}(\cdot,t_{{n}}))_{L^2(\Omega)} + (\widehat{V}^{n}_{h_{\bx}} - V^{p_t-1}_{\bh}(\cdot, t_{{n}}), \lambda_{h_{\bx,n}}^{p_t}(\cdot,t_{{n}}))_{L^2(\Omega)}, 
    \\ & \hspace{2.3cm} + {h_t^{(n)}} \omega_{p_t}(c^2\nabla_{\bx} \widetilde{U}^{p_t}_{\bh}(\cdot, t_{{n}}), \nabla_{\bx}\lambda_{h_{\bx,n}}^{p_t}(\cdot,t_{{n}}))_{L^2(\Omega)} 
     = {h_t^{(n)}} \omega_{p_t}(F(\cdot, t_{{n}}), \lambda_{h_{\bx,n}}^{p_t}(\cdot,t_{{n}}))_{L^2(\Omega)}.
\end{aligned}
\end{equation}
Comparing~{the reformulation in~\eqref{eq:GL1}--\eqref{eq:GL2} of~\eqref{eq:35}} %these relations 
with {the Lobatto IIIA/IIIB scheme}~\eqref{eq:36} completes the proof of the equivalence claim for~\eqref{eq:30}. Finally, the Lobatto IIIA/IIIB pair{s are} known to be a symplectic integrator{s} for autonomous~\cite{Sun93} and non-autonomous~\cite{Jay21} Hamiltonian systems.
\end{proof}
\begin{remark}
The use of hybridization to investigate the geometric properties of numerical schemes is inspired by \cite{McSt2020}.
\end{remark}

\section*{Conclusion}
We have shown that a slight modification of the stabilized formulation of~\cite{Zank2021} yields a second-order-in-time DG--CG method for the linear wave equation that is equivalent to the classical first-order-in-time approach. This equivalence allows the second-order scheme to inherit the unconditional stability and convergence properties of the first-order method while reducing the number of unknowns and enabling efficient time-stepping implementation. The framework was then extended to the semilinear wave equation. Finally, we proposed symplectic variants based on Gauss–Legendre and Gauss–Lobatto quadratures, which give up energy preservation in favor of geometric structure preservation.

\appendix

\section{First-order-in-time DG--CG time discretizations are Gauss--Legendre Runge--Kutta methods} \label{app:1}

Gauss--Legendre Runge--Kutta methods are implicit Runge--Kutta methods based on collocation at Gauss--Legendre quadrature points. For the initial-value problem~$\mdot{\boldsymbol{y}}
(t) = \boldsymbol{F}(t, \boldsymbol{y}(t))$, $\boldsymbol{y}(0) = \boldsymbol{y}_0$, the Gauss--Legendre Runge--Kutta method with~$m$ points on a time mesh $\{t_0, t_1,\ldots\}$ with step sizes $h_t^{(n)} := t_{n} - t_{n-1}$ has the following form (see, e.g. \cite[Lemma 3.5]{Iserles2009}): for every time step $n \ge 1$, we seek a polynomial $\boldsymbol{q}_n$ (with vector coefficients) of degree $m$ such that
\begin{equation} \label{eq:38}
\begin{aligned}
    \boldsymbol{q}_n(t_{n-1}) & = \boldsymbol{y}_{n-1}
    \\ \mdot{\boldsymbol{q}}_n(t_{n-1} + c_j h_t^{(n)}) & = \boldsymbol{F}(t_{n-1} + c_j h_t^{(n)}, \boldsymbol{y}(t_{n-1}+c_j h_t^{(n)})), \quad j = 1,\ldots,m,
\end{aligned}
\end{equation}
and then set $\boldsymbol{y}_n = \boldsymbol{q}_n(t_n)$. Here,~$\{c_j\}$ are the nodes of the Gauss--Legendre quadrature over $[0,1]$ with $m$ points.

\smallskip
\noindent
Consider the temporal counterpart of \eqref{eq:33}, i.e., the problem: find $u_{h_t}^{p_t}, v_{h_t}^{p_t} \in S_{h_t}^{p_t}(0,T)$ such that
\begin{equation} \label{eq:39}
	\begin{cases}
		(\mdot{u}_{h_t}^{p_t}, \chi_{h_t}^{p_t-1})_{L^2(0,T)} - (v_{h_t}^{p_t}, \chi_{h_t}^{p_t-1})_{L^2(0,T)} = 0, & \text{for all~} \chi_{h_t}^{p_t-1} \in S^{(p_t-1,-1)}_{h_t}(0,T), \\
		(\mdot{v}_{h_t}^{p_t}, \lambda_{h_t}^{p_t-1})_{L^2(0,T)} + \mu(u_{h_t}^{p_t}, \lambda_{h_t}^{p_t-1})_{L^2(0,T)}  &
        \\ \hspace{1.5cm} + (\Int g(u_{h_t}^{p_t}), \lambda_{h_t}^{p_t-1})_{L^2(0,T)} = (\Int f,\lambda_{h_t}^{p_t-1})_{L^2(0,T)} & \text{for all~} \lambda_{h_t}^{p_t-1} \in S^{(p_t-1,-1)}_{h_t}(0,T),
	\end{cases}
\end{equation}
with initial conditions $u_{h_t}^{p_t}(0) = u_0$, $v_{h_t}^{p_t}(0) = v_0$, a source $f \in C^0([0,T])$, a nonlinear term $g : \R \to \R$, and a constant~$\mu>0$. Here, $\mu$ represents an eigenvalue of the operator $-\div(c^2 \cdot)$ with Dirichlet boundary conditions.

\smallskip
\noindent
For any $n \ge 0$, consider testing in~\eqref{eq:39} with $\chi_{h_t}^{p_t-1}(t) = \lambda_{h_t}^{p_t-1}(t) = \mathcal{L}_j^{(n)}(t) \mathds{1}_{[t_n,t_{n+1}]}(t)$, where $\{\mathcal{L}_j^{(n)}(t)\}$ is the Lagrange basis for polynomials of degree~$\le p_t-1$ associated with the $p_t$ Gauss--Legendre nodes~$\{x^{(n)}_j\}$ over $[t_n,t_{n+1}]$. Note that the degree of each $\mathcal{L}_j^{(n)}(t)$ is $p_t-1$, and recall that the Gauss--Legendre quadrature rule with $p_t$ nodes is exact for polynomials up to degree $2p_t-1$. For the first equation in \eqref{eq:39}, for $j=1,\ldots,p_t$, we get
\begin{equation*}
	\begin{aligned}
		 (\mdot{u}_{h_t}^{p_t}, \mathcal{L}_j^{(n)})_{L^2(t_n,t_{n+1})} - (v_{h_t}^{p_t}, \mathcal{L}_j^{(n)})_{L^2(t_n,t_{n+1})}
		 & = \sum_{k=1}^{p_t} \mdot{u}_{h_t}^{p_t}(x_k^{(n)}) \mathcal{L}_j^{(n)}(x_k^{(n)}) \omega_k^{(n)} - \sum_{k=1}^{p_t} v_{h_t}^{p_t}(x_k^{(n)}) \mathcal{L}_j^{(n)}(x_k^{(n)}) \omega_k^{(n)}
         \\ & = \mdot{u}_{h_t}^{p_t} (x_j^{(n)}) \omega_j^{(n)} - v_{h_t}^{p_t}(x_j^{(n)}) \omega_j^{(n)},
	\end{aligned}
\end{equation*}
since $\mathcal{L}_j^{(n)}(x_k^{(n)}) = \delta_{jk}$. Here, we denoted by $\{\omega_j^{(n)}\}$ the weights of Gauss--Legendre quadrature with $p_t$ points over $[t_n,t_{n+1}]$. The first equation in~\eqref{eq:39} then implies
\begin{equation} \label{eq:40}
	\begin{aligned}
		\mdot{u}_{h_t}^{p_t} (x_j^{(n)}) - v_{h_t}^{p_t}(x_j^{(n)}) = 0, \qquad 1\le j\le p_t.
	\end{aligned}
\end{equation}
Similarly, for the terms of second equation in~\eqref{eq:39}, we obtain for, for $j=1,\ldots,p_t$
\begin{equation*}
    \begin{aligned}
        & (\mdot{v}_{h_t}^{p_t}, \mathcal{L}_j^{(n)})_{L^2(t_n,t_{n+1})} + \mu (u_{h_t}^{p_t}, \mathcal{L}_j^{(n)})_{L^2(t_n,t_{n+1})} + (\Int g(u_{h_t}^{p_t}), \mathcal{L}_j^{(n)})_{L^2(t_n,t_{n+1})} 
        \\ & \quad\quad\quad = \mdot{v}_{h_t}^{p_t} (x_j^{(n)}) \omega_j^{(n)} + \mu u_{h_t}^{p_t}(x_j^{(n)}) \omega_j^{(n)} + \Int g(u_{h_t}^{p_t})(x_j^{(n)}) w_j^{(n)}
        \\ & \quad\quad\quad = \mdot{v}_{h_t}^{p_t} (x_j^{(n)}) \omega_j^{(n)} + \mu u_{h_t}^{p_t}(x_j^{(n)}) \omega_j^{(n)} + g(u_{h_t}^{p_t})(x_j^{(n)}) w_j^{(n)}
	\end{aligned}
\end{equation*}
and
\begin{equation*}
		(\Int f,\mathcal{L}_j^{(n)})_{L^2(t_n,t_{n+1})} = \Int f(x_j^{(n)}) w_j^{(n)} = f(x_j^{(n)}) w_j^{(n)}
\end{equation*}
from which we get
\begin{equation} \label{eq:41}
	\begin{aligned}
	\mdot{v}_{h_t}^{p_t} (x_j^{(n)}) + \mu u_{h_t}^{p_t}(x_j^{(n)}) + g(u_{h_t}^{p_t})(x_j^{(n)}) & = f(x_j^{(n)}), \qquad 1\le j\le p_t. 
	\end{aligned}
\end{equation}
Now, consider applying a $p_t$-stage Gauss--Legendre Runge--Kutta method to the initial value problem 
\begin{equation*}
\begin{cases} 
    \mdot u = v &  \text{in~} [0,T],
    \\ 
    \mdot v = - \mu u - g(u) + f & \text{in~} [0,T],
    \\  u(0) = u_0, \quad v(0) = v_0.
\end{cases}
\end{equation*}
In the notation of \eqref{eq:38}, we have $\boldsymbol{y}(t) = (u(t), v(t))$ and $\boldsymbol{F}(t,\boldsymbol{y}(t)) = (v(t), - \mu u(t) + f(t))$. Then, from~\eqref{eq:38}, 
we deduce, for all $n \ge 1$, that the Gauss--Legendre Runge--Kutta approximation $\boldsymbol{q}_n = ({u_{h_t}^{p_t}}_{|_{[t_{n-1},t_n]}}, {v_{h_t}^{p_t}}_{|_{[t_{n-1},t_n]}})$ solves 
\begin{equation} \label{eq:42}
\begin{aligned}
    \boldsymbol{q}_n(t_{n-1}) & = (u_{h_t}^{p_t}(t_{n-1}), v_{h_t}^{p_t}(t_{n-1})), \\
    \mdot{\boldsymbol{q}}_n(x_j^{(n)}) & = \bigl(v_{h_t}^{p_t}(x_j^{(n)}), -\mu u_{h_t}^{p_t}(x_j^{(n)}) -g(u_{h_t}^{p_t})(x_j^{(n)}) + f(x_j^{(n)})\bigr), \quad j = 1,\ldots,p_t,
\end{aligned}
\end{equation}
since $x_j^{(n)} = t_{n-1} + c_j h_t^{(n)}$. Comparing  \eqref{eq:42} with \eqref{eq:40}--\eqref{eq:41}, we conclude that the scheme \eqref{eq:39} is exactly a Gauss--Legendre Runge--Kutta scheme with $p_t$ stages.

\section*{Funding}
\noindent
This research was funded in part by the Austrian Science Fund (FWF) \href{https://doi.org/10.55776/F65}{10.55776/F65}. MF and EZ are members of the Gruppo Nazionale Calcolo Scientifico-Istituto Nazionale di Alta Matematica (GNCS-INdAM).

\bibliography{bibliography}{}

\begin{thebibliography}{10}

\bibitem{BalesLasiecka1994}
L.~Bales and I.~Lasiecka.
\newblock Continuous finite elements in space and time for the nonhomogeneous
  wave equation.
\newblock {\em Comput. Math. Appl.}, 27(3):91--102, 1994.

\bibitem{Barbu1993}
V.~Barbu.
\newblock {\em Analysis and control of nonlinear infinite-dimensional systems},
  volume 190 of {\em Mathematics in Science and Engineering}.
\newblock Academic Press, Inc., Boston, MA, 1993.

\bibitem{BignardiMoiola2023}
P.~Bignardi and A.~Moiola.
\newblock A space-time continuous and coercive formulation for the wave
  equation.
\newblock {\em Numerische Mathematik}, 157(4):1211--1258, 2025.

\bibitem{Bottasso97}
C.~L. Bottasso.
\newblock A new look at finite elements in time: a variational interpretation
  of {R}unge-{K}utta methods.
\newblock {\em Appl. Numer. Math.}, 25(4):355--368, 1997.

\bibitem{Cockbrun2025}
B.~Cockburn.
\newblock The discretizations of the derivative by the continuous galerkin and
  the discontinuous galerkin methods are exactly the same.
\newblock {\em Beijing Journal of Pure and Applied Mathematics}, 2(1):115--122,
  April 2025.
\newblock Special Issue on High-order Accuracy Numerical Methods and Their
  Applications, in honor of Professor Chi-Wang Shu on the occasion of his 65th
  birthday.

\bibitem{DongGeorgoulisMascottoWang2025}
Z.~Dong, E.H. Georgoulis, L.~Mascotto, and Z.~Wang.
\newblock A posteriori error analysis and adaptivity of a space-time finite
  element method for the wave equation in second order formulation.
\newblock {\em arXiv:2509.08537}, 2025.

\bibitem{DongMascottoWang2024}
Z.~Dong, L.~Mascotto, and Z.~Wang.
\newblock A priori and a posteriori error estimates of a ${C}^0$-in-time method
  for the wave equation in second order formulation.
\newblock {\em arXiv:2411.03264}, 2024.

\bibitem{FerrariFraschini2024}
M.~Ferrari and S.~Fraschini.
\newblock Stability of conforming space–time isogeometric methods for the
  wave equation.
\newblock {\em Math. Comp.}, 95(1):683--719, 2026.

\bibitem{FerrariPerugia2025}
M.~Ferrari and I.~Perugia.
\newblock Intrinsic unconditional stability in space-time isogeometric
  approximation of the acoustic wave equation in second-order formulation.
\newblock {\em Math. Models Methods Appl. Sci.}, 2025.
\newblock Accepted for publication.

\bibitem{FraschiniLoliMoiolaSangalli2023}
S.~Fraschini, G.~Loli, A.~Moiola, and G.~Sangalli.
\newblock An unconditionally stable space–time isogeometric method for the
  acoustic wave equation.
\newblock {\em Comput. Math. Appl.}, 169:205--222, 2024.

\bibitem{FrenchPeterson1996}
D.~A. French and T.~E. Peterson.
\newblock A continuous space-time finite element method for the wave equation.
\newblock {\em Math. Comp.}, 65(214):491--506, 1996.

\bibitem{FrenchSchaeffer90}
D.~A. French and J.~W. Schaeffer.
\newblock Continuous finite element methods which preserve energy properties
  for nonlinear problems.
\newblock {\em Appl. Math. Comput.}, 39(3):271--295, 1990.

\bibitem{Gomez2025}
S.~Gómez.
\newblock A variational approach to the analysis of the continuous space-time
  {FEM} for the wave equation.
\newblock {\em Math. Comp.}, 2025.
\newblock In press.

\bibitem{HairerLubichWannerBook}
E.~Hairer, C.~Lubich, and G.~Wanner.
\newblock {\em Geometric numerical integration}, volume~31 of {\em Springer
  Series in Computational Mathematics}.
\newblock Springer-Verlag, Berlin, second edition, 2006.
\newblock Structure-preserving algorithms for ordinary differential equations.

\bibitem{Hulme72b}
B.~L. Hulme.
\newblock Discrete {G}alerkin and related one-step methods for ordinary
  differential equations.
\newblock {\em Math. Comp.}, 26:881--891, 1972.

\bibitem{Hulme72a}
B.~L. Hulme.
\newblock One-step piecewise polynomial {G}alerkin methods for initial value
  problems.
\newblock {\em Math. Comp.}, 26:415--426, 1972.

\bibitem{Iserles2009}
A.~Iserles.
\newblock {\em A first course in the numerical analysis of differential
  equations}.
\newblock Cambridge Texts in Applied Mathematics. Cambridge University Press,
  Cambridge, second edition, 2009.

\bibitem{Jay21}
L.~O. Jay.
\newblock Symplecticness conditions of some low order partitioned methods for
  non-autonomous {H}amiltonian systems.
\newblock {\em Numer. Algorithms}, 86(2):495--514, 2021.

\bibitem{KarakashianMakridakis1998}
O.~Karakashian and C.~Makridakis.
\newblock A space-time finite element method for the nonlinear {S}chr\"odinger
  equation: the discontinuous {G}alerkin method.
\newblock {\em Math. Comp.}, 67(222):479--499, 1998.

\bibitem{KarakashianMakridakis1999}
O.~Karakashian and C.~Makridakis.
\newblock A space-time finite element method for the nonlinear {S}chr\"odinger
  equation: the continuous {G}alerkin method.
\newblock {\em SIAM J. Numer. Anal.}, 36(6):1779--1807, 1999.

\bibitem{KarakashianMakridakis2005}
O.~Karakashian and C.~Makridakis.
\newblock Convergence of a continuous {G}alerkin method with mesh modification
  for nonlinear wave equations.
\newblock {\em Math. Comp.}, 74(249):85--102, 2005.

\bibitem{LangerZank2021}
U.~Langer and M.~Zank.
\newblock Efficient direct space-time finite element solvers for parabolic
  initial-boundary value problems in anisotropic {S}obolev spaces.
\newblock {\em SIAM J. Sci. Comput.}, 43(4):A2714--A2736, 2021.

\bibitem{Lasagni88}
F.~M. Lasagni.
\newblock Canonical {R}unge-{K}utta methods.
\newblock {\em Z. Angew. Math. Phys.}, 39(6):952--953, 1988.

\bibitem{LasieckaTriggiani1994}
I.~Lasiecka and R.~Triggiani.
\newblock Recent advances in regularity of second-order hyperbolic mixed
  problems, and applications.
\newblock In {\em Dynamics Reported. Expositions in Dynamical Systems. New
  Series}, volume~3, pages 104--162. Springer-Verlag, Berlin, 1994.

\bibitem{LieroStefanelli2013}
M.~Liero and U.~Stefanelli.
\newblock Weighted inertia-dissipation-energy functionals for semilinear
  equations.
\newblock {\em Boll. Unione Mat. Ital. (9)}, 6(1):1--27, 2013.

\bibitem{Lions1969}
J.-L. Lions.
\newblock {\em Quelques m\'ethodes de r\'esolution des probl\`emes aux limites
  non lin\'eaires}.
\newblock Dunod, Paris; Gauthier-Villars, Paris, 1969.

\bibitem{LoliSangalli2025}
G.~Loli and G.~Sangalli.
\newblock Space--time isogeometric analysis: a review with application to wave
  propagation.
\newblock {\em SeMA Journal}, 2025.

\bibitem{LoscherSteinbachZank2023}
R.~L{\"o}scher, O.~Steinbach, and M.~Zank.
\newblock Numerical results for an unconditionally stable space--time finite
  element method for the wave equation.
\newblock In {\em Domain Decomposition Methods in Science and Engineering
  XXVI}, pages 625--632. Springer, 2023.

\bibitem{MaRa1999}
J.~E. Marsden and T.~S. Ratiu.
\newblock {\em Introduction to mechanics and symmetry}, volume~17 of {\em Texts
  in Applied Mathematics}.
\newblock Springer-Verlag, New York, second edition, 1999.
\newblock A basic exposition of classical mechanical systems.

\bibitem{McSt2020}
R.~I. McLachlan and A.~Stern.
\newblock Multisymplecticity of hybridizable discontinuous {G}alerkin methods.
\newblock {\em Found. Comput. Math.}, 20(1):35--69, 2020.

\bibitem{SanchezValenzuela25}
M.~A. S\'anchez and J.~Valenzuela.
\newblock Symplectic {H}amiltonian finite element methods for semilinear wave
  propagation.
\newblock {\em J. Sci. Comput.}, 99(3):Paper No. 62, 27, 2024.

\bibitem{SanzSerna88}
J.~M. Sanz-Serna.
\newblock Runge-{K}utta schemes for {H}amiltonian systems.
\newblock {\em BIT}, 28(4):877--883, 1988.

\bibitem{SanzSernaCalvo1994}
J.~M. Sanz-Serna and M.~P. Calvo.
\newblock {\em Numerical {H}amiltonian problems}, volume~7 of {\em Applied
  Mathematics and Mathematical Computation}.
\newblock Chapman \& Hall, London, 1994.

\bibitem{SchotzauSchwab2000}
D.~Sch\"otzau and C.~Schwab.
\newblock An {$hp$} a priori error analysis of the {DG} time-stepping method
  for initial value problems.
\newblock {\em Calcolo}, 37(4):207--232, 2000.

\bibitem{SteinbachZank2019}
O.~Steinbach and M.~Zank.
\newblock {\em A stabilized space--time finite element method for the wave
  equation}, volume 128 of {\em Lect. Notes Comput. Sci. Eng.}
\newblock Springer International Publishing, Cham, 2019.

\bibitem{SteinbachZank2020}
O.~Steinbach and M.~Zank.
\newblock Coercive space--time finite element methods for initial boundary
  value problems.
\newblock {\em Electron. Trans. Numer. Anal.}, 52:154--194, 2020.

\bibitem{Sun93}
G.~Sun.
\newblock Symplectic partitioned {R}unge-{K}utta methods.
\newblock {\em J. Comput. Math.}, 11(4):365--372, 1993.

\bibitem{Tani2017}
M.~Tani.
\newblock A preconditioning strategy for linear systems arising from
  nonsymmetric schemes in isogeometric analysis.
\newblock {\em Comput. Math. Appl.}, 74(7):1690--1702, 2017.

\bibitem{Walkington2014}
N.~J. Walkington.
\newblock Combined {DG}-{CG} time stepping for wave equations.
\newblock {\em SIAM J. Numer. Anal.}, 52(3):1398--1417, 2014.

\bibitem{Zank2020}
M.~Zank.
\newblock {\em Inf-sup stable space--time methods for time-dependent partial
  differential equations}.
\newblock Verlag d. Technischen Universit\"at Graz, 2020.

\bibitem{Zank2021}
M.~Zank.
\newblock Higher-order space--time continuous galerkin methods for the wave
  equation.
\newblock In {\em 14th WCCM-ECCOMAS Congress 2020}, volume 700, 2021.

\bibitem{Zank2019}
M.~Zank.
\newblock The {N}ewmark method and a space-time {FEM} for the second-order wave
  equation.
\newblock In {\em Numerical mathematics and advanced applications---{ENUMATH}
  2019}, volume 139 of {\em Lect. Notes Comput. Sci. Eng.}, pages 1225--1233.
  Springer, Cham, 2021.

\bibitem{Zank2025}
M.~Zank.
\newblock Efficient {D}irect {S}pace-time {F}inite {E}lement {S}olvers for the
  {W}ave {E}quation in {S}econd-order {F}ormulation.
\newblock {\em J. Sci. Comput.}, 105(1):Paper No. 19, 2025.

\end{thebibliography}
\bibliographystyle{plain}

\end{document}